\documentclass[12pt,a4paper]{article}
\usepackage{graphicx}
\usepackage[]{amsmath,amssymb,amsthm}
\usepackage{epstopdf, color}
\usepackage{setspace}

\newtheorem{theorem}{Theorem}[section]
\newtheorem{proposition}[theorem]{Proposition}
\newtheorem{lemma}[theorem]{Lemma}
\newtheorem{conj}[theorem]{Conjecture}

\theoremstyle{definition}
\newtheorem{definition}[theorem]{Definition}
\theoremstyle{remark}

\newcommand{\R}{\mathbb{R}}

\newcommand{\Z}{\mathbb{Z}}


\begin{document}

\title{Almost complete and equable heteroclinic networks}

\author{Peter Ashwin\thanks{Centre for Systems, Dynamics and Control, Department of Mathematics, University of Exeter, Exeter EX4 4QF, UK}, Sofia B.S.D. Castro\thanks{Faculdade de Economia and Centro de Matem\'atica, Universidade do Porto, Rua Dr.\ Roberto Frias, 4200-464 Porto, Portugal} \& Alexander Lohse\thanks{Fachbereich Mathematik, Universit\"at Hamburg, Bundesstra{\ss}e 55, 20146 Hamburg, Germany}~\thanks{Corresponding author, mailto:alexander.lohse@math.uni-hamburg.de}}

\maketitle

\begin{abstract}
Heteroclinic connections are trajectories that link invariant sets for an autonomous dynamical flow: these connections can robustly form networks between equilibria, for systems with flow-invariant spaces. In this paper we examine the relation between the heteroclinic network as a flow-invariant set and directed graphs of possible connections between nodes. We consider realizations of a large class of transitive digraphs as robust heteroclinic networks and show that although robust realizations are typically not complete (i.e.\ not all unstable manifolds of nodes are part of the network), they can be almost complete (i.e.\ complete up to a set of zero measure within the unstable manifold) and equable (i.e.\ all sets of connections from a node have the same dimension). We show there are almost complete and equable realizations that can be closed by adding a number of extra nodes and connections. 
We discuss some examples and describe a sense in which an equable almost complete network embedding is an optimal description of stochastically perturbed motion on the network.
\end{abstract}

\noindent {\em Keywords:} heteroclinic cycle, heteroclinic network, directed graph

\vspace{.3cm}

\noindent {\em AMS classification:} 34C37, 34D45, 37C29, 37C75


\section{Introduction}

Heteroclinic cycles and networks appear in a range of dynamical models posed as ordinary differential equations (ODEs) that try to capture ``intermittent'' behaviour, for example, in the onset of fluid turbulence, encoding of neural states or species competition in ecosystems: see for example Krupa \cite{Kru97}. They manifest as attracting dynamics where the state remains close to saddle equilibria for long periods of time, interspersed with rapid switches between equilibria. This behaviour can remain robust to perturbations that preserve some symmetries or other structures of the system: see for example Weinberger and Ashwin \cite{Weinberger2018} for a recent review.

In most cases, heteroclinic networks have been found and studied from analysis of a given system of ODEs. However, in an attempt to understand general properties of heteroclinic networks Ashwin and Postlethwaite \cite{AshPos13} suggest that the converse problem of {\em designing a system of ODEs that realize (i.e.\ embed) a given directed graph as a heteroclinic network} is of interest. It is also of potential interest in applications such as design of computational systems that permit only certain transitions. Several recent papers, Ashwin and Postlethwaite  \cite{AshPos13,AshPos16a}, Bick \cite{Bic18} and Field \cite{Fie15, Fie17}, have considered several approaches to the design of systems that have specific heteroclinic networks. These approaches to the realization of a graph as a heteroclinic network typically result in networks that are not asymptotically stable or even contain unstable manifolds of all saddles. This is discussed in \cite{Fie17} where a heteroclinic network is called {\em clean} if it is compact and equal to the union of the unstable manifolds of its equilibria. In the present article, we consider networks that are typically not compact -- we call them {\em complete} if they contain all unstable manifolds of their equilibria. Thus, a network is clean if and only if it is compact and complete. The notion of completeness is related to whether the network can be visible as an attractor: indeed it is necessary for a network to be clean/complete for it to be asymptotically stable \cite[Remark 1.4]{Fie17}.

This paper introduces some concepts, results and examples that aim to clarify the structure and dynamics of heteroclinic networks by showing that although we typically cannot realize arbitrary directed graphs (from a large class) as clean heteroclinic networks, we can achieve {\em almost completeness} (the network contains almost all of the unstable manifolds) and in addition ensure {\em equability} (a property of a node meaning that all outgoing connections from that node have the same dimension) of all nodes in the network.

To introduce this more precisely, we consider a system of ordinary differential equations 
\begin{equation}
\dot{x}=f(x)
\label{eq:ode}
\end{equation}
in $\R^n$ ($n<\infty$) with smooth $f$ and a bounded globally attracting open set: we will write $\varphi(t,x_0)$ to denote the flow generated by solutions $x(t)$ of (\ref{eq:ode}) starting at $x_0$. Clearly a variety of invariant sets may appear and be of importance for the asymptotic behavior of typical initial conditions. These not only organize the autonomous dynamics but also allow one to understand how the dynamics behave under small perturbations of various types.

If (\ref{eq:ode}) is equivariant under the action of a compact Lie group $\Gamma$ acting orthogonally on $\R^n$, then there is an extensive literature considering many heteroclinic networks with the remarkable property that they are persistent or robust under perturbations of $f$ that respect the symmetries $\Gamma$: see for example  the work of Krupa and Melbourne \cite{Kru97,KruMel95a,KruMel04}.

There are several possible ways to understand a heteroclinic network in a graph-theoretic manner. Note that in many cases the directed graph (digraph) is referred to simply as a graph. Our approach gives the minimal possible graph one could naturally associate with a heteroclinic network: we say there is an edge between vertices if there is at least one connection between the corresponding nodes. 
Another approach is to have an edge for every connection between nodes: this is appropriate for many cases investigated in the literature (e.g.\ \cite{KruMel95a}), but typically results in infinite graphs for the networks we consider here. Yet another choice could be to have an edge for each connected component of the set of connections between nodes. In some instances this may also result in a more complicated graph.

Many papers in the literature consider heteroclinic networks as unions of heteroclinic cycles (e.g.\ Hoyle \cite{Hoy06}) and in cases where there are one-dimensional unstable manifolds this is highly appropriate. In this paper we take a different view however -- we consider the heteroclinic network as the fundamental definition and show in Lemma~\ref{lem:cycles} that a network is a union of cycles, or cycles are cyclic subsets of a network.

This paper is structured as follows: in Section \ref{sec:as-stab} we discuss the relation between directed graphs and heteroclinic cycles and networks, introducing the properties of complete, almost complete, equable and exclusive nodes and networks and give examples of these. We also recall the definition of a clean network. Section \ref{sec:completion} shows in Theorem~\ref{thm:completion} that the simplex method of \cite{AshPos13} allows one to construct realizations of a large class of directed graphs as an almost complete and equable heteroclinic network that is part of a larger closed network. We conjecture, in Section~\ref{sec:discussion}, that this result can be strengthened by (a) widening the class of directed graphs and (b) providing a stronger result -- that the embedding network is not just closed but clean.

In Section~\ref{sec:example} we discuss a number of examples that clarify and illustrate these results and concepts. Section~\ref{sec:application} presents a simple stochastic model of randomly perturbed dynamics on a heteroclinic network. For this model, typical trajectories will only explore an almost complete and equable subnetwork. In this sense, the almost complete and equable subnetwork can be seen as an optimal realization of the network with added noise.

Section~\ref{sec:discussion} concludes with a discussion.

\section{Heteroclinic networks and directed graphs}\label{sec:as-stab}


Given the close relation between heteroclinic networks and directed graphs (see for instance \cite{AshPos13} or \cite{Fie17}), we begin with a section that establishes terminology and notation that allows for an easy transition between the two. A substantial part of this section is not original. We include it because we believe that it is useful for most readers to have the relevant concepts framed in a convenient way.

We denote by $\alpha(x)$ (resp. $\omega(x)$) the usual limit set of the trajectory through $x$ as $t\rightarrow -\infty$ (resp. $\infty$). For a heteroclinic network, there is a natural graph structure between nodes representing the equilibria, such that edges in the graph correspond to connections between equilibria in the network. However, the correspondence is more subtle than one might suppose as the set of connections may consist of many, or even a continuum of trajectories \cite{AshCho98,AshFie99}.

We define the unstable and stable sets of an equilibrium $\xi$ as usual
$$
W^u(\xi)=\{x\in\R^n~:~\alpha(x)=\xi\}, ~~~W^s(\xi)=\{x\in\R^n~:~\omega(x)=\xi\}
$$
and note that for hyperbolic equilibria $\xi$ these are flow-invariant manifolds with dimension 
corresponding to the dimensions of unstable and stable eigen\-spaces of $\xi$. Suppose we have a finite collection of hyperbolic equilibria 
$$
N=\{\xi_1, \hdots, \xi_m\}
$$
for (\ref{eq:ode}).  We define the {\em full set of connections from $\xi_i$ to $\xi_j$} ($\xi_i, \xi_j \in N$) as
$$
C_{ij}=W^u(\xi_i)\cap W^s(\xi_j).
$$
This is a flow-invariant (possibly empty) set: if $i\neq j$ we refer to each trajectory in $C_{ij}$ as a \emph{connection from $\xi_i$ to $\xi_j$}. We include cases where $C_{ij}$ is  continuum of connections \cite{AshCho98}. In the case $i=j$ we call a connection {\em homoclinic}, otherwise we say it is {\em heteroclinic}.\footnote{In equivariant systems, if $\xi_i\neq\xi_j$ but they are in the same group orbit, then some authors consider the connection homoclinic. We do not make this distinction until Section \ref{sec:example}.}

The {\em full set of connections between equilibria in $N$} is defined
$$
C(N)=\bigcup_{i\neq j} C_{ij}.
$$
In what follows we use the notation $C(.)$ to describe the connections associated with the object in brackets.

{\bf We make a standing assumption} that there are no homoclinic connections, i.e.\ we assume that $C_{ii}=\{\xi_i\}$ for all $i$.

Many references in the literature use the following definitions:
A {\em heteroclinic cycle} is a union of finitely many hyperbolic equilibria connected by trajectories in a cyclic way. A {\em heteroclinic network} is a connected union of finitely many heteroclinic cycles.

When studying heteroclinic networks from directed graphs, another definition may be convenient. The relation between the two is clarified in Lemma~\ref{lem:cycles}.
Recall that an invariant set $\Sigma$ is {\em indecomposable} (cf \cite{AshFie99}) for the dynamics of (\ref{eq:ode}) if for every $\epsilon>0$ and pair of points $a,b\in \Sigma$ there is a directed $\epsilon$-chain from $a$ to $b$ within $\Sigma$, where an $\epsilon$-chain is a sequence of points $\{x_k\}_{k=1}^n$ in $\Sigma$ and times $\{t_k>1\}_{k=1}^{n-1}$ such that $x_1=a$, $x_n=b$ and $|\varphi(t_k,x_k)-x_{k+1}|<\epsilon$ for $k=1,\ldots,n-1$. By $N(\Sigma)$ we denote the set of equilibria in $\Sigma$ which we will assume is finite. The following definition is a special case of \cite[Definition 2.26]{AshFie99} where the nodal set is $N(\Sigma)$ and the ``depth'' \cite[Definition 2.22]{AshFie99} is one because all trajectories are either in the nodal set, or limit to the nodal set.\footnote{More general heteroclinic networks, in the sense of \cite{AshFie99}, can have higher depth connections in that they can contain trajectories that limit to connections.}

\begin{definition}\label{def:alternative}
We say $\Sigma$ is a {\em heteroclinic network} between equilibria $N(\Sigma)$ if it is an indecomposable flow-invariant set such that $$
N(\Sigma)\subset \Sigma\subset N(\Sigma) \cup C(N(\Sigma)).
$$
\end{definition}

We refer to the equilibria $N(\Sigma)=\{\xi_1,\ldots,\xi_k\}$ as the {\em nodes} of the network and define
$$
C_{ij}(\Sigma)=C_{ij}\cap \Sigma.
$$
as the {\em connection from $\xi_i$ to $\xi_j$ within the network} $\Sigma$. Note that there may be many connecting trajectories between $\xi_i$ and $\xi_j$ in $\Sigma$ and also some that we do not include in a particular $\Sigma$. Note the decomposition
$$
\Sigma=N(\Sigma)\cup C(\Sigma)
$$
is a disjoint union, where $C(\Sigma):=C(N(\Sigma))\cap \Sigma$ denotes the {\em connections within the network}.

To structure our discussion of graphs associated with heteroclinic networks, we 
use the following notation to go between these concepts:
\begin{itemize}
\item  $G(\Sigma)$ to denote the graph related to a given heteroclinic network $\Sigma$;
\item  $\Sigma_G$ to denote a heteroclinic network associated to a given graph $G$ (this may not be unique).
\end{itemize}
We start with graphs.
 Associated with any heteroclinic network $\Sigma$ there is a digraph $G(\Sigma)=(V,E)$ with vertices $V=\{v_1,\ldots,v_k\}$, where $v_j$ corresponds to node $\xi_j\in N(\Sigma)$, and the set $E$ of {\em directed edges}, where $[v_i\to v_j] \in E$ corresponds to $C_{ij}(\Sigma) \neq \emptyset$ with $i\neq j$. For a given $G(\Sigma)$ we write $N(v)$ to denote the equilibrium corresponding to vertex $v$. Note that $C_{ij}(\Sigma)$ is the full set of connections from $\xi_i$ to $\xi_j$ in $\Sigma$ corresponding to the edge $[v_i\to v_j]$.

As usual\footnote{There are several good references for graph theory, we refer the reader to for example \cite{Die05,Fou92}.}, we say that a {\em cycle} is a sequence of vertices and edges $\{v_1,[v_1\to v_2], v_2,[v_2\to v_3], v_3,\ldots,[v_{m-1}\to v_m],v_m\}$ such that $v_1=v_m$ and all other vertices are distinct. A cycle with $m$ edges is called an {\em $m$-cycle}. A $3$-cycle is also called a {\em triangle}. In the context of digraphs, we reserve the term $m$-cycle for those that are transitive, that is, an oriented circuit through all the vertices; we use triangle for both the transitive and the non-transitive case.
Recall that $G$ is {\em transitive} if for any two distinct vertices $v_i$, $v_j$ there is a directed path from $v_i$ to $v_j$ within $G$.

\begin{definition}
Suppose that $G=(V,E)$ is a digraph.
\begin{itemize}
\item $G$ is an {\em $\Delta$-clique} if it is a triangle that is not transitive (see Figure~\ref{fig:graph_triangles}).
\item Let $V'=\{w,v_1,\ldots,v_k\}$ be the subset of all the distinct vertices of $V$ that $w$ connects to. If the only edges of the graph induced on $V'$ have the form $[w\to v_j]$ for $j=1,\ldots,k$, then we say $w$ is a \emph{splitting vertex of order $k\geq 2$}.
\end{itemize}
\end{definition}

\begin{figure}[!htb]
 \centerline{\includegraphics[width=0.8\textwidth]{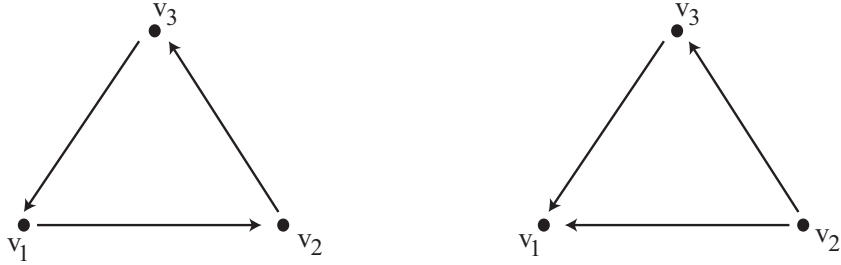}}
 \caption{Two triangles: a $3$-cycle (left) and a $\Delta$-clique (right).\label{fig:graph_triangles}}
 \end{figure}

The use of $\Delta$ in $\Delta$-clique should not be mistaken for the maximum degree, usually denoted by this symbol in graph theory. The symbol $\Delta$ in the present context has a visual association with the dynamics involved.

Note that $w$ being a splitting vertex of order $k$ is a somewhat stronger assumption than simply saying $w$ has out-degree $k:=\#\{j~:~[w \to v_j]\in E\}$ since it also makes assumptions on nearby edges. More precisely:

\begin{lemma}
Suppose $G=(V,E)$ is a transitive digraph. Consider a vertex $w$ and all $v_1,\ldots,v_k$ such that there are edges $[w\to v_j]\in E$. Then $w$ is a splitting vertex for $G=(V,E)$ if and only if the digraph induced on $V'=\{w,v_1,\ldots,v_k\}$ has no $\Delta$-clique or $2$-cycle. 
\end{lemma}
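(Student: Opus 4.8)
The plan is to prove both directions by unwinding the definition of a splitting vertex of order $k$. Recall that $w$ is a splitting vertex of order $k\geq 2$ precisely when, on the vertex set $V'=\{w,v_1,\ldots,v_k\}$ (the vertices $w$ connects to, together with $w$ itself), the \emph{only} edges of the induced subgraph are the edges $[w\to v_j]$, $j=1,\ldots,k$. So the whole statement reduces to checking that, for a transitive $G$, the phrase ``the induced subgraph on $V'$ has only the edges $[w\to v_j]$'' is equivalent to ``the induced subgraph on $V'$ contains no $\Delta$-clique and no $2$-cycle.''

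For the forward direction, assume $w$ is a splitting vertex. Then any triangle inside $V'$ must use only edges of the form $[w\to v_j]$; but a triangle on three vertices $\{w,v_i,v_j\}$ needs an edge between $v_i$ and $v_j$, which is forbidden, so there is no $\Delta$-clique (and in fact no triangle at all) in the induced subgraph. Similarly a $2$-cycle on $\{w,v_j\}$ would require the edge $[v_j\to w]$, which is again not of the allowed form, and a $2$-cycle on $\{v_i,v_j\}$ requires edges between two $v$'s, also forbidden; hence no $2$-cycle. This direction uses only the definition and does not need transitivity.

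For the converse, suppose the induced subgraph on $V'$ has no $\Delta$-clique and no $2$-cycle; we must show that every edge of this induced subgraph is of the form $[w\to v_j]$. The edges $[w\to v_j]$ are certainly present by choice of the $v_j$. First, there is no edge $[v_j\to w]$ for any $j$: together with the present edge $[w\to v_j]$ this would be a $2$-cycle. It remains to rule out edges between two distinct $v_i,v_j$. This is where transitivity of $G$ enters, and is the step I expect to be the main (if mild) obstacle: one must argue that such an edge, say $[v_i\to v_j]$, together with $[w\to v_i]$ and $[w\to v_j]$, forms a triangle on $\{w,v_i,v_j\}$, and then check that this triangle is in fact non-transitive, i.e.\ a $\Delta$-clique, contradicting the hypothesis. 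The triangle has edges $[w\to v_i]$, $[w\to v_j]$, $[v_i\to v_j]$; since $w$ has out-degree $2$ within it but in a transitive triangle ($3$-cycle) every vertex has out-degree exactly $1$, this triangle cannot be a $3$-cycle, hence it is a $\Delta$-clique — contradiction. (An edge $[v_j\to v_i]$ is handled identically by relabelling.) One should also note the orientation of the two $w$-edges is fixed, namely outgoing, precisely because the $v_j$ were defined as the out-neighbours of $w$; transitivity is what guarantees we are working inside a genuine digraph where these local configurations are forced, and it rules out the degenerate possibility that some putative ``edge'' between $v_i$ and $v_j$ is present without completing a triangle. Assembling these observations shows the induced subgraph has exactly the edges $[w\to v_j]$, so $w$ is a splitting vertex of order $k$, completing the proof.
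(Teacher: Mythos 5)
Your proof is correct and takes essentially the same route as the paper, which only writes out the contrapositive of the ``if'' direction: any extra edge in the induced subgraph is either $[v_j\to w]$, giving a $2$-cycle, or $[v_i\to v_j]$, giving a $\Delta$-clique on $\{w,v_i,v_j\}$. Your search for where transitivity of $G$ enters is unnecessary --- neither direction actually uses it --- but that remark does not affect the correctness of the argument.
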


\proof
Suppose that $w$ and $V'$ are as above and there is at least one additional edge in the graph $G'=(V',E')$ induced on $V'$ to those required for a splitting vertex. Then either there is an edge $[v_j\to w]$ and hence there is a $2$-cycle, or there is an edge $[v_j\to v_l]$ and hence there is a $\Delta$-clique on $\{w,v_j,v_l\}$.
\qed
\medskip

We say $\xi$ is a {\em splitting node} for $\Sigma$ if the corresponding vertex is a splitting vertex for $G(\Sigma)$.

\bigbreak

Now suppose $G(\Sigma)=(V,E)$ and consider a subset $N'\subset N(\Sigma)$ of nodes. The induced subgraph $G'=(V',E')$ consists of all edges in $G(\Sigma)$ between vertices in $N'$. This can be used to construct an invariant set
$$
\Sigma_{G'}=\left(\bigcup_{[v_i\to v_j]\in E'} C_{ij}(\Sigma)\right) \cup\left(\bigcup_{v\in V'} N(v)\right)
$$
however, there is no guarantee that $\Sigma_{G'}$ is necessarily transitive or even connected. As an illustration, consider the Kirk and Silber graph (see Figure~\ref{Fig:KScomplete} (left)). If $N'=\{v_3,v_4\}$, then $E'=\emptyset$ and $\Sigma_{G'}$ is not connected. If $N'=\{v_2,v_3,v_4\}$, then $E'=\{[v_2 \to v_3],[v_2 \to v_4]\}$ and $\Sigma_{G'}$ is connected but not transitive.

We say $\Sigma$ is a {\em heteroclinic cycle} if $G(\Sigma)$ is a $k$-cycle for some $k\geq 2$. Note that in such a case, the invariant set $\Sigma$ is not necessarily a topological circle because it may contain multiple connections between two nodes and still be a cycle in our definition. Even worse, it is possible that connections may accumulate on each other away from the equilibria (if this is not the case, then the nodes are exclusive in a sense we define later).  
We now give a lemma that characterizes the relation between heteroclinic networks and digraphs.

\begin{lemma}
For any heteroclinic network $\Sigma$ on $N$, the graph $G(\Sigma)$ is a transitive digraph between vertices $N(\Sigma)$. For any transitive subgraph $H=(V_H,E_H)\subset G(\Sigma)$ there is a heteroclinic network $\Sigma_H\subset \Sigma$ such that $G(\Sigma_H)=H$.
\end{lemma}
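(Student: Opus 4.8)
The plan is to treat the two assertions separately. The first --- that $G(\Sigma)$ is a transitive digraph on $N(\Sigma)$ --- is extracted from indecomposability of $\Sigma$ (i.e.\ chain transitivity), reading off a directed path from a family of $\epsilon$-chains as $\epsilon\to 0$. The second is a direct construction of $\Sigma_H$ together with a verification against Definition~\ref{def:alternative}.

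For the first assertion, fix distinct nodes $\xi_i,\xi_j\in N(\Sigma)$. By Definition~\ref{def:alternative}, $\Sigma$ is indecomposable, so for each $n\in\N$ there is a $(1/n)$-chain from $\xi_i$ to $\xi_j$ within $\Sigma$. I would argue that, because $\Sigma\subset N(\Sigma)\cup C(N(\Sigma))$ and the finitely many nodes are hyperbolic, such chains cannot ``leak'' past the nodes: as $n\to\infty$ they must accumulate on a finite concatenation of genuine connecting trajectories lying in $\Sigma$, say through nodes $\xi_i,\xi_{l_1},\ldots,\xi_{l_r},\xi_j$, whence $[v_i\to v_{l_1}],\ldots,[v_{l_r}\to v_j]$ is a directed path in $G(\Sigma)$. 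I would phrase this contrapositively: if $A\subset N(\Sigma)$ is the set of nodes reachable from $\xi_i$ along directed paths of $G(\Sigma)$ and $\xi_j\notin A$, then no edge leaves $A$, so the set $\{x\in\Sigma:\alpha(x)\in A\}$ is positively invariant and contains $\xi_i$, and one shows a chain from $\xi_i$ with sufficiently small step size stays near this set and never reaches $\xi_j$, a contradiction. The main obstacle is exactly this last point: controlling where a small-step $\epsilon$-chain can travel in a neighbourhood of a hyperbolic equilibrium --- in particular ruling out escape along a connection that merely passes close to a node --- which is where I expect to need the $\lambda$-lemma together with standard chain-recurrence facts (cf.\ \cite{AshFie99}), or alternatively a direct appeal to a suitable chain-recurrence theorem. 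This is the technical heart of the lemma.

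For the second assertion, given a transitive subgraph $H=(V_H,E_H)\subset G(\Sigma)$ (the case $\#V_H=1$ being trivial), I would set
$$
\Sigma_H:=\Big(\bigcup_{v\in V_H}N(v)\Big)\cup\Big(\bigcup_{[v_i\to v_j]\in E_H}C_{ij}(\Sigma)\Big),
$$
which lies in $\Sigma$ and is flow-invariant as a union of flow-invariant sets. Since a connecting trajectory between two \emph{distinct} hyperbolic equilibria contains no equilibrium (its $\alpha$- and $\omega$-limits differ), the equilibria of $\Sigma_H$ are exactly $\{N(v):v\in V_H\}$, so $N(\Sigma_H)=\{N(v):v\in V_H\}$ and $N(\Sigma_H)\subset\Sigma_H\subset N(\Sigma_H)\cup C(N(\Sigma_H))$ by construction. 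For indecomposability, take $a,b\in\Sigma_H$: flow $a$ forward until it is within $\epsilon$ of its $\omega$-limit node $\xi_p\in N(\Sigma_H)$, use a directed path $v_p\to\cdots\to v_q$ in $H$ (available by transitivity of $H$) where $\xi_q$ is the $\alpha$-limit node of $b$, traverse this path by jumping, at each node, a distance $<\epsilon$ onto a point of the next outgoing connection (such points exist arbitrarily close to the node because the connection is flow-invariant and its backward orbit limits to the node), and finally reach $b$ by running the last connection backward from $b$; every point and transition piece used lies in $\Sigma_H$, so this is an $\epsilon$-chain within $\Sigma_H$. Hence $\Sigma_H$ is a heteroclinic network. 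Finally, because the sets $C_{ij}$ are pairwise disjoint and disjoint from the nodes, $C_{ij}(\Sigma_H)\neq\emptyset$ iff $[v_i\to v_j]\in E_H$, so $G(\Sigma_H)$ has vertex set $V_H$ and edge set $E_H$, i.e.\ $G(\Sigma_H)=H$.

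In summary, the only substantive step is the passage from $\epsilon$-chains to genuine connections in the first assertion; the converse direction is bookkeeping once indecomposability is unwound and transitivity of $H$ is used to supply the needed directed paths.
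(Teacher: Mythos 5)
Your approach coincides with the paper's: transitivity of $G(\Sigma)$ is deduced from indecomposability, and $\Sigma_H$ is defined by exactly the union the paper writes down. On the second assertion you go considerably further than the paper, whose proof consists of exhibiting the set $\Sigma_H$ and stopping. Your additional checks --- that $N(\Sigma_H)$ is precisely $\{N(v):v\in V_H\}$ because a connecting trajectory between distinct hyperbolic equilibria contains no equilibrium, that the sandwich condition of Definition~\ref{def:alternative} holds, that $\epsilon$-chains can be threaded through $\Sigma_H$ using transitivity of $H$ and the fact that each connection accumulates on its $\alpha$-limit node, and that no spurious edges appear because distinct $C_{ij}$ are pairwise disjoint --- are all correct and constitute exactly the bookkeeping the paper leaves implicit.

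On the first assertion, the technical point you flag is genuine and remains open in your write-up; be aware that the paper does not close it either: its entire argument for that half is the sentence that transitivity ``follows from $\Sigma$ being indecomposable.'' The difficulty is precisely the one you isolate, and it is tied to non-exclusive nodes in the sense of Definition~\ref{def:nodes}. If $A$ is the set of nodes reachable from $\xi_i$ in $G(\Sigma)$ and some $\xi_k\in A$ lies in $\overline{C_{lm}(\Sigma)}$ for a connection with $\xi_m\notin A$, then an $\epsilon$-chain sitting at $\xi_k$ may jump onto $C_{lm}(\Sigma)$ and flow to $\xi_m$, so the positively invariant set $\{x\in\Sigma:\alpha(x)\in A\}$ does not by itself trap chains. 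A complete proof must either show that such accumulation forces the corresponding edges to be present in $G(\Sigma)$ (this is where the $\lambda$-lemma and the depth-one structure from \cite{AshFie99} would enter) or otherwise exclude these configurations; you have correctly identified this as the substantive step but have not carried it out. In short: your proposal is at least as complete as the paper's proof, strictly more detailed on the constructive half, and honestly flags the one place where both arguments are incomplete.
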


\proof
Transitivity of the graph $G(\Sigma)$ follows from  $\Sigma$ being indecomposable. The network $\Sigma_H\subset \Sigma$ can be found by taking the union of equilibria in $V_H$ and connections corresponding to $E_H$:
$$
\Sigma_H=\left(\bigcup_{[v_i\to v_j]\in E_H} C_{ij}(\Sigma)\right) \cup\left(\bigcup_{v\in V_H} N(v)\right).
$$
\qed 
\medskip

This can be used to show the following result, which -- as mentioned above -- is often used as part of the definition of a heteroclinic network.

\begin{lemma}
A heteroclinic network (according to Definition~\ref{def:alternative}) is a connected union of heteroclinic cycles.
\label{lem:cycles}
\end{lemma}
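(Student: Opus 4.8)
The plan is to prove that a heteroclinic network $\Sigma$ in the sense of Definition~\ref{def:alternative} decomposes as a connected union of heteroclinic cycles. Connectedness is immediate: since $\Sigma$ is indecomposable, the previous lemma gives that $G(\Sigma)$ is a transitive digraph, and a transitive digraph on a finite vertex set is (strongly) connected, hence $\Sigma$ is connected. So the substance is to write $\Sigma$ as a union of heteroclinic cycles, i.e.\ of invariant subsets whose associated digraph is a $k$-cycle.

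First I would reduce the statement to a purely graph-theoretic claim using the machinery already established. By the previous lemma, every transitive subgraph $H\subset G(\Sigma)$ is realized by a heteroclinic network $\Sigma_H\subset\Sigma$, and when $H$ is a $k$-cycle this $\Sigma_H$ is precisely a heteroclinic cycle in the sense defined just above the lemma. Moreover, taking unions of subgraphs corresponds (on the level of the realizing invariant sets $\Sigma_{\bullet}$, defined as the union of the relevant nodes and connection sets $C_{ij}(\Sigma)$) to taking unions of the invariant sets. Hence it suffices to show that $G(\Sigma)=(V,E)$ is the union, over some family of directed cycles $\{Z_\lambda\}$, of those cycles: that every vertex and every edge of $G(\Sigma)$ lies on some directed cycle contained in $G(\Sigma)$. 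Then $\Sigma=\bigcup_\lambda \Sigma_{Z_\lambda}$, each $\Sigma_{Z_\lambda}$ is a heteroclinic cycle, and the union is connected.

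The key step is therefore: in a transitive (strongly connected) digraph with at least two vertices and no homoclinic edges, every edge lies on a directed cycle. Given an edge $[v_i\to v_j]$, transitivity supplies a directed path from $v_j$ back to $v_i$; concatenating this path with the edge $[v_i\to v_j]$ yields a closed directed walk through that edge. A closed directed walk always contains a directed cycle (a $k$-cycle in the paper's sense, i.e.\ with distinct intermediate vertices) passing through a prescribed edge — one extracts it by removing repeated vertices / chords in the standard way, noting $k\geq 2$ since there are no loops. Each vertex lies on such a cycle too, since (the graph having at least two vertices and being strongly connected) every vertex has at least one outgoing edge. Collecting one cycle $Z_{e}$ through each edge $e\in E$ gives a finite family with $\bigcup_e Z_e = G(\Sigma)$ as claimed.

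I expect the main obstacle to be bookkeeping rather than a deep difficulty: one must be careful that the invariant set $\Sigma_{Z}$ associated with a graph cycle $Z$ via the formula $\Sigma_Z=\bigl(\bigcup_{[v_i\to v_j]\in Z}C_{ij}(\Sigma)\bigr)\cup\bigl(\bigcup_{v\in Z}N(v)\bigr)$ genuinely recovers all of $\Sigma$ when one unions over the chosen family — this uses the decomposition $\Sigma = N(\Sigma)\cup C(\Sigma)$ and the fact that $C(\Sigma)=\bigcup_{[v_i\to v_j]\in E}C_{ij}(\Sigma)$, so that covering every edge of $G(\Sigma)$ suffices. One should also double-check the degenerate case: if $\#N(\Sigma)=1$ the standing assumption $C_{ii}=\{\xi_i\}$ forces $\Sigma=\{\xi_1\}$, which is vacuously (or by convention) handled, and otherwise strong connectedness guarantees every node has in- and out-degree at least one, so no node or edge is left uncovered.
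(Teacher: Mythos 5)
Your proposal is correct and follows essentially the same route as the paper: decompose the transitive digraph $G(\Sigma)$ into a finite union of directed cycles, realize each as a heteroclinic cycle $\Sigma_{G_j}\subset\Sigma$ via the preceding lemma, and observe that the union of these recovers all nodes and connections of $\Sigma$. The only difference is that you spell out the graph-theoretic fact (every edge of a strongly connected loop-free digraph lies on a directed cycle) that the paper's proof takes for granted.
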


\proof 
Consider a decomposition of a transitive graph $G(\Sigma)$ into a finite union of cycles $G_1,\ldots,G_k$. Each of the $\Sigma_{G_j}\subset \Sigma$ is a heteroclinic cycle but the union of cycles contains $\Sigma$, that is,
$$
\Sigma\subset \bigcup_{j=1}^{k} \Sigma_{G_j}
$$
as it contains all connections and nodes within $\Sigma$. Hence $\Sigma$ is precisely this union.
\qed
\medskip

The minimum length cycles in $\Sigma$ are of interest: we say that the heteroclinic network $\Sigma$ {\em contains a $k$-cycle} for some $k> 1$ if $G(\Sigma)$ contains a $k$-cycle. Note that the decomposition of a heteroclinic network into cycles is usually not unique.  The proof of Lemma~\ref{lem:cycles} implicitly uses a decomposition into cycles that have length equal to the minimum length cycle that returns to any edge but equally there may be a decomposition using longer cycles. Our standing assumption means that $G(\Sigma)$ only contains $k$-cycles for $k\geq 2$.

\subsection{Properties of nodes of heteroclinic networks}

We now consider some properties of connections from nodes within heteroclinic networks.  

\begin{definition}\label{def:nodes}
Suppose that $\Sigma$ is a heteroclinic network and $\xi_i$ a node in that network. We define the following:
\begin{itemize}
\item
$\xi_i$ is \emph{complete in $\Sigma$} if $W^u(\xi_i) \subset \Sigma$ (see Figure~\ref{complete-node-figure}).
\item
$\xi_i$ is \emph{almost complete in $\Sigma$} if $W^u(\xi_i) \setminus \Sigma$ is of measure zero (with respect to Lebesgue measure for any volume form on $W^u(\xi_i)$).
\item
$\xi_i$ is \emph{equable in $\Sigma$} if $C_{ij}$ is a union of manifolds of the same dimension and this dimension $\dim(C_{ij}(\Sigma))$ is equal for all $j$ with $C_{ij}(\Sigma) \neq \emptyset$ (see Figure~\ref{splitting-node-figure}).
\item 
$\xi_i$ is \emph{exclusive in $\Sigma$} (see Figure~\ref{splitting-node-figure})
 if for all $j$ where $C_{ij}(\Sigma)$ is non-empty we have
\begin{equation*}
\overline{C_{ij}(\Sigma)}\cap N(\Sigma) = \{\xi_i,\xi_j\}.
\end{equation*}
\item
$\Sigma$ is a {\em complete}/{\em almost complete}/{\em equable}/{\em exclusive} network if all nodes are respectively complete/ almost complete/ equable/exclusive.
\item
$\Sigma$ is called {\em clean} \cite[Definition 1.3]{Fie17} if it is compact and complete.
\end{itemize}
\end{definition}

We note that the graph of a complete network is not necessarily a complete graph (where every pair of vertices is directly connected by an edge). The network in Figure~\ref{Fig:KScomplete} (right) is complete but the corresponding graph is not ($\xi_3$ and $\xi_4$ are not directly connected, for instance). 

If a node $\xi_i$ is not exclusive, then there exist connections in $C_{ij}$ that are arbitrarily close to a node in the network other than $\xi_i$ and $\xi_j$. Note also that an equable network may have connections of different dimensions. We comment on the effects of equability on the dynamics in Section \ref{sec:application}.

\begin{figure}[!htb]
 \centerline{\includegraphics[width=0.8\textwidth]{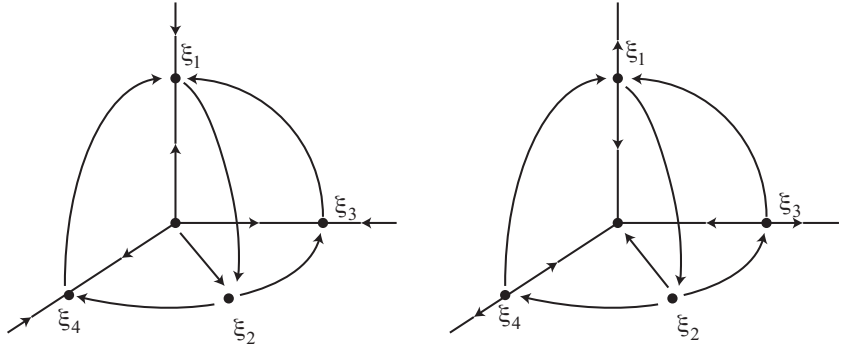}}
 \caption{Let $\Sigma$ be the network with nodes $\xi_i$, $i=1,2,3,4$ and connections between them in $\R^3$. The node $\xi_2$ on the left is complete as the origin is unstable and the 1-dimensional unstable manifold of $\xi_2$ is contained in $\Sigma$. The node $\xi_2$ on the right is not complete as some points in $W^u(\xi_2)$ move away from $\Sigma$.\label{complete-node-figure}}
 \end{figure} 

\begin{figure}[!htb]
 \centerline{\includegraphics[width=0.5\textwidth]{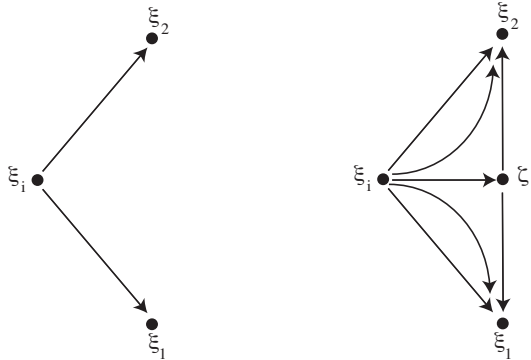}}
 \caption{The splitting node $\xi_i$ (left) is made complete by inserting an additional node $\zeta$ and additional connections (right).The node $\xi_i$ is not equable on the right as the dimension of the connections from $\xi_i$ can be either 1, $[\xi_i\to \zeta]$, or 2, $[\xi_i \to \xi_j]$, $j=1,2$. 
On the left $\xi_i$ is exclusive but on the right it is not: $\zeta \in \overline{C_{ij}}$, $j=1,2$. Note also that on the right $\xi_i$ is no longer a splitting node.
 \label{splitting-node-figure}}
 \end{figure} 

The well-known network of Kirk and Silber \cite{KirSil94} provides an example of an equable network that is not complete, we comment on this in Subsection \ref{subsec:KS}. Other authors have implicitly noted the importance of graph structures such as $\Delta$-cliques for properties of clean heteroclinic networks \cite{Bic18,Fie17}. Note that a clean network need not be equable: we give an example for this in Subsection \ref{subsec:B3B3C4}. 

For a given set of equilibria $N$ it is not necessary that $C(N) \cup N$ is complete or even closed -- this can be for a variety of reasons. Although it may not be true that for a given $N$ there is $N'$ containing $N$ such that $N' \cup C(N')$ is complete, in Section~\ref{sec:completion}, we find constructions such that $N'\cup C(N')$ is at least closed.

The following result highlights that a splitting node $\xi_i$ in a complete and equable network is either very simple and the splitting is of order $2$, or it is not exclusive -- the closure of $C_{ij}$ contains a node $\xi_k$ that is neither $\xi_i$ nor $\xi_j$. If this is the case, then there will be some $\ell\not\in\{i,j\}$ such that $C_{i\ell}\neq\emptyset$.

\begin{lemma}
\label{lem:completenode}
Suppose that the node $\xi_i$ is complete in $\Sigma$. Then $\xi_i$ is almost complete in $\Sigma$. If in addition $\xi_i$ is exclusive, equable and a splitting node of order $k\geq 2$, then $k=2$ and $\dim(W^u(\xi_i))=1$.
\end{lemma}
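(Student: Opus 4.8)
The plan is to establish the two assertions separately, the first being essentially immediate and the second requiring a dimension-counting argument combined with the hypothesis that $\xi_i$ is a splitting node.

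First I would dispatch the claim that completeness implies almost completeness: if $W^u(\xi_i)\subset\Sigma$, then $W^u(\xi_i)\setminus\Sigma=\emptyset$, which trivially has measure zero with respect to any volume form on $W^u(\xi_i)$. So the content lies entirely in the second statement.

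For the second part, suppose $\xi_i$ is complete, exclusive, equable, and a splitting node of order $k\geq 2$. Let $d=\dim(C_{ij}(\Sigma))$ be the common dimension of the connections leaving $\xi_i$, which is well defined by equability; since the $C_{ij}(\Sigma)$ are nonempty flow-invariant sets containing trajectories, $d\geq 1$. The key geometric input is that, because $\xi_i$ is a splitting vertex, the set $V'=\{w,v_1,\ldots,v_k\}$ of vertices reachable in one step from $w=v_i$ has no edges other than $[w\to v_j]$; in particular there are no connections among the $\xi_{j}$ within the induced subnetwork, and (using exclusiveness) no connection $C_{ij}(\Sigma)$ has any node other than $\xi_i,\xi_j$ in its closure. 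Now consider $W^u(\xi_i)$, a manifold of dimension $u:=\dim(W^u(\xi_i))$. By completeness, $W^u(\xi_i)\subset\Sigma$, and every point of $W^u(\xi_i)$ other than $\xi_i$ itself lies on a trajectory whose $\omega$-limit is some node of $\Sigma$; by exclusiveness that node must be one of the $\xi_j$ with $[v_i\to v_j]\in E$. Hence $W^u(\xi_i)\setminus\{\xi_i\}=\bigcup_{j=1}^k C_{ij}(\Sigma)$, a disjoint union (disjoint because $\omega$-limits are unique) of $k\geq 2$ sets each a union of $d$-dimensional manifolds. I would then argue that near $\xi_i$ the local unstable manifold is connected of dimension $u$, so removing the single point $\xi_i$ cannot disconnect it when $u\geq 2$; since the decomposition into the $C_{ij}(\Sigma)$ is a genuine separation into $k\geq 2$ pieces that are relatively closed in $W^u(\xi_i)\setminus\{\xi_i\}$ (their closures only meet at $\xi_i$, by exclusiveness), we are forced to $u=1$. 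Finally, with $u=1$ the set $W^u(\xi_i)\setminus\{\xi_i\}$ has exactly two connected components (the two half-branches of the one-dimensional unstable manifold), each contributing one connection, so $k=2$; and each $C_{ij}(\Sigma)$ is then an open half-branch, a one-dimensional manifold, consistent with $d=1$.

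The main obstacle I anticipate is making the topological separation step fully rigorous: one must justify that $W^u(\xi_i)\setminus\{\xi_i\}$ is connected when $\dim W^u(\xi_i)\geq 2$ (true, since it is a manifold minus a point of codimension $\geq 2$) and that exclusiveness genuinely forces the sets $C_{ij}(\Sigma)$ to be relatively closed in $W^u(\xi_i)\setminus\{\xi_i\}$ with pairwise disjoint closures there — the only common closure point allowed is $\xi_i$, which has been removed. The splitting-node hypothesis is exactly what rules out connections $\xi_j\to\xi_\ell$ that could otherwise glue the pieces together. Once this is in place, the conclusion $k=2$ and $\dim(W^u(\xi_i))=1$ follows by counting components of a punctured $1$-manifold.
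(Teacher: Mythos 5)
Your proposal is correct and follows essentially the same route as the paper: the paper intersects $W^u(\xi_i)$ with a small sphere $S$ about $\xi_i$ (connected for $\dim W^u(\xi_i)\geq 2$, two points for dimension $1$) and observes that completeness plus exclusiveness partitions $S$ into $k$ disjoint nonempty closed sets, forcing $k=2$ and $\dim W^u(\xi_i)=1$; your version removes the point $\xi_i$ instead of slicing with a sphere, but the topological input (a connected set cannot be split into $k\geq 2$ disjoint nonempty relatively closed pieces) and the way exclusiveness is used are identical. The first part is handled the same trivial way in both.
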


\proof
Suppose $\dim(W^u(\xi_i))=d$. If $\xi_i$ is complete in $\Sigma$, then $\ell(W^u(\xi_i)\setminus \Sigma)=\ell(\emptyset)=0$ where $\ell$ denotes $d$-dimensional Lebesgue measure on $W^u(\xi_i)$ and so $\xi_i$ is almost complete in $\Sigma$. 

For the second part, pick some small $\delta>0$ such that $S:=\{x\in W^u(\xi_i)~:~|x-\xi_i|=\delta\}$ is diffeomorphic to a $(d-1)$-sphere. Note that the $(d-1)$-sphere is connected for $d\geq 2$ and has two components for $d=1$. If $\xi_i$ is also an equable splitting node of order $k$, then all connections from $\xi_i$ must have the same dimension. If $\xi_i$ is exclusive then $\overline{C_{ij}}\cap S$ will not intersect $\overline{C_{im}}$ for $m \in \{1,\ldots,k\}$, $m\neq j$: hence there is a partition of $S$ into $k$ closed disjoint sets. This is only possible for $k=2$ and $d=1$.
\qed
\medbreak

If $\Sigma$ is a complete heteroclinic network, then by Lemma~\ref{lem:completenode} it is almost complete: moreover, in such a case it is maximal in the sense that
$$
\Sigma=\bigcup_{\xi\in N(\Sigma)} W^u(\xi) \;\; \mbox{ and  } \;\; 
C_{ij}(\Sigma)=C_{ij}.
$$
There is also a partition of $W^u(\xi)$ into a union of connections from $\xi$. Moreover, from the proof it is easy to see that if $\xi_i$ is a splitting node and $W^u(\xi_i)$ is at least 2-dimensional, then $\xi_i$ is not complete. If $W^u(\xi_i)$ is 2-dimensional and $\xi_i$ is complete, then it is not a splitting node.


Note that asymptotic stability of a compact heteroclinic network implies that it is clean  \cite{Fie17}. The opposite is not true: heteroclinic objects can lose asymptotic stability through resonance bifurcations. Take the well-known Guckenheimer-Holmes cycle in \cite{GucHol88} for instance, which is complete, even clean, but unstable when condition (c) in \cite[Lemma 3]{GucHol88} is broken. By contrast, a transverse bifurcation involves a sign change for some eigenvalue (or its real part) and thus affects the completeness of a cycle/network.

It follows from Definition \ref{def:nodes} that if a node $\xi_i$ is such that $W^u(\xi_i)=1$, then $\xi_i$ is equable; such a $\xi_i$ is also exclusive if the nodes are equilibria and the network is of depth one in the sense of \cite{AshFie99}.

If $G$ has a $\Delta$-clique, then it does not follow that there is a non-exclusive or a non-equable node in $\Sigma_G$. Take a $B_2^+$ cycle (see Subsection \ref{subsec:B2B2}) between equilibria $\xi_a$ and $\xi_b$ for example. Add a fourth space dimension with an equilibrium $\xi$ on the extra axis, such that $\dim(W^u(\xi))=1$ and there are connections from $\xi$ to $\xi_a$ and $\xi_b$. Then the three equilibria form a $\Delta$-clique, but $\xi$ is exclusive and equable. This can be embedded in a heteroclinic network in a higher dimensional space.

\section{Realization as almost complete equable heteroclinic networks}\label{sec:completion}

The problem of realizing abstract digraphs as heteroclinic networks was raised in \cite{AshPos13,AshPos16a} and \cite{Fie15}, and several methods have been proposed. Suppose $G=(V,E)$ is an arbitrary transitive digraph. We say the dynamics of (\ref{eq:ode}) {\em realizes} $G$ as the heteroclinic network $\Sigma$ if there is a choice of $f$ and $\Sigma$ such that $G(\Sigma)=G$. Without loss of generality we can choose $\Sigma$ to be the maximal choice, i.e.
$$
\Sigma=\left(\bigcup_{[v_i\to v_j]\in E} C_{ij}(\Sigma) \right) \cup\left(\bigcup_{v\in V} N(v)\right).
$$
In \cite{AshPos13}, two methods are presented to show that, under minimal assumptions, a digraph $G$ can be realized as a heteroclinic network. The simplex method embeds the graph in a simplex by placing the nodes on the coordinate axes. This method realizes the graph provided it has neither $1$- nor $2$-cycles.
The cylinder method places the nodes along one coordinate axis and realizes any graph provided it has no $1$-cycles.

In this section we show that the simplex construction, for a certain choice of parameters, gives a realization that is an almost complete, equable subnetwork of a closed heteroclinic network, and this is robust under certain equivariant perturbations. We state and prove this as Theorem~\ref{thm:completion} and later, in Subsection \ref{subsec:KS} give an example
that elucidates the result and method of proof. 

According to \cite[Proposition 1]{AshPos13} any graph $G$ without 1- and 2-cycles can be realized as a heteroclinic network $\Sigma=C(N) \cup N$ on a set of equilibria $N$. The resulting vector field on $\R^n$, $n=\# N$, is $\Z_2^n$-equivariant and yields an equilibrium on each coordinate axis and connections in coordinate planes. Theorem \ref{thm:completion} shows that under the additional hypothesis that there are no $\Delta$-cliques and with an appropriate choice of parameters, this can be done in such a way that $\Sigma$ is an almost complete, equable subnetwork of a closed network $\Sigma'$. Although the vector field is as in \cite[Proposition 1]{AshPos13} our method of proof involves the construction of Lyapunov-type functions that use the additional hypotheses.

\begin{theorem}\label{thm:completion} 
Let $G$ be a transitive directed graph on $n$ vertices with no 1-cycles, 2-cycles or $\Delta$-cliques. Then there exists a $\Z_2^n$-equivariant vector field $f$ on $\R^n$ that realizes $G$ as a network $\Sigma(N)$ between nodes $N=\{\xi_1,\ldots,\xi_n\}$. This realization is robust to $\Z_2^n$-equivariant perturbations.
The vector field can be chosen such that there is an additional set of nodes $N'$  and a closed heteroclinic network $\Sigma'$ between $N\cup N'$ such that $\Sigma$ is an almost complete, equable subnetwork of $\Sigma'$.
\end{theorem}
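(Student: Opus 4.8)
The plan is to start from the simplex realization of \cite[Proposition~1]{AshPos13}, which already gives a $\Z_2^n$-equivariant vector field on $\R^n$ with one equilibrium $\xi_i$ on each positive coordinate axis, connections lying in the two-dimensional coordinate planes $P_{ij} = \{x_k = 0 : k \neq i,j\}$, and a robustly realized network $\Sigma = C(N)\cup N$ with $G(\Sigma) = G$. The freedom we exploit is in the choice of the radial/transverse eigenvalues at each $\xi_i$: the edge $[v_i\to v_j]\in E$ corresponds to an expanding eigenvalue of $\xi_i$ in the $x_j$-direction, while absence of an edge $[v_i\to v_k]$ corresponds to a contracting eigenvalue in the $x_k$-direction. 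The first step is to choose these eigenvalues so that \emph{all} expanding directions at $\xi_i$ have the same eigenvalue $e_i>0$ and all contracting transverse directions have the same eigenvalue $-c_i<0$ (the radial eigenvalue along the $x_i$-axis being $-r_i<0$); this is the source of equability. Under the no-$\Delta$-clique hypothesis, if $[v_i\to v_j]$ and $[v_i\to v_k]$ are both edges, then there is no edge between $v_j$ and $v_k$ in either direction, so the plane $P_{jk}$ carries no connection of the network; combined with the no-2-cycle hypothesis this means the outgoing connections from $\xi_i$ inside the coordinate planes $P_{ij}$ are "independent" and each $C_{ij}(\Sigma)$ is (generically) a one-dimensional connecting trajectory in $P_{ij}$, giving $\dim C_{ij}(\Sigma)$ equal for all $j$ — so $\xi_i$ is equable.

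The second step is the construction of the Lyapunov-type function that forces almost completeness. On the unstable manifold $W^u(\xi_i)$ — which is the cone spanned by the $x_j$-axes for $j$ with $[v_i\to v_j]\in E$ — I want to show that Lebesgue-a.e.\ trajectory starting near $\xi_i$ converges to one of the coordinate planes $P_{ij}$ and hence follows a connection of $\Sigma$. Consider coordinates $u_j = x_j^2$ on the positive orthant; the $\Z_2^n$-equivariance makes the flow in the $u$-variables polynomial, and along $W^u(\xi_i)$ one builds a function of the ratios $u_j/u_k$ (for the expanding directions $j,k$ at $\xi_i$) whose logarithmic time-derivative, using the equal-eigenvalue choice, is sign-definite except on the measure-zero set where two or more of the $u_j$ vanish to the same order. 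This drives typical trajectories onto a single coordinate plane, i.e.\ into $\Sigma$, and the exceptional set — trajectories that stay on a lower-dimensional coordinate subspace containing $\xi_i$ — has zero measure in $W^u(\xi_i)$. So $\xi_i$ is almost complete, and since this holds for every node, $\Sigma$ is almost complete and equable.

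The third step is closing the network. The trajectories in $W^u(\xi_i)\setminus\Sigma$ are exactly those leaving along a coordinate subspace of dimension $\geq 3$; their $\omega$-limit behaviour need not be captured by $N$. To remedy this, I add extra equilibria $N'$ — for instance one new equilibrium in the interior of each "problematic" coordinate subspace, or at a large-radius location — with dynamics (still $\Z_2^n$-equivariant, by a further modification of $f$ supported away from $\Sigma$) chosen so that the escaping trajectories limit onto these new nodes and onto connections among $N\cup N'$, while not introducing any new connections among the original nodes that would change $G(\Sigma)$. Taking $\Sigma'$ to be the union of $\Sigma$ with these added nodes and connections, one checks $\Sigma'$ is compact (all of $\bigcup_{\xi\in N\cup N'} W^u(\xi)$ is accounted for, using the globally attracting bounded set) and complete, hence closed, and that $\Sigma\subset\Sigma'$ is the desired almost complete equable subnetwork. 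Robustness under $\Z_2^n$-equivariant perturbations follows because all the relevant inequalities (eigenvalue signs, strict sign of the Lyapunov derivative, transversality of the new connections) are open conditions preserved by the flow-invariance of the coordinate subspaces.

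I expect the \textbf{main obstacle} to be the second step: verifying that the Lyapunov-type function on each $W^u(\xi_i)$ genuinely has a sign-definite derivative \emph{globally} along $W^u(\xi_i)$ (not just near $\xi_i$), uniformly enough that the exceptional set really is measure zero and not merely nowhere dense. This requires controlling the polynomial vector field in the $u$-coordinates across the whole cone $W^u(\xi_i)$, and the no-$\Delta$-clique hypothesis has to be used in an essential way precisely here — it is what guarantees that the different outgoing connections from $\xi_i$ do not "interact" through intermediate coordinate planes in a way that could create interior attractors within $W^u(\xi_i)$ and spoil the sign of the derivative.
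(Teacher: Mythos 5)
Your overall architecture (simplex realization, Lyapunov-type functions, extra nodes to close the network) matches the paper's, but there is a genuine conceptual gap at the heart of Steps 1--2: you assert that each $C_{ij}(\Sigma)$ is ``(generically) a one-dimensional connecting trajectory in $P_{ij}$'' and that equability follows because these dimensions all equal one. If $\xi_i$ has out-degree $k\geq 2$, then $W^u(\xi_i)$ is $k$-dimensional, and a union of finitely many one-dimensional connections has measure zero in it -- so with that reading of $C_{ij}$ the node could never be almost complete, which is the main thing the theorem asserts. Recall that $C_{ij}=W^u(\xi_i)\cap W^s(\xi_j)$ is the \emph{full} set of connections, not just those in the invariant coordinate plane. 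The mechanism the paper uses is the opposite of the one you describe: one arranges (using the no-$\Delta$-clique and no-$2$-cycle hypotheses, which force the absence of edges among the targets in $O_i$) that the restriction of the flow to the outgoing subspace $\Omega_i$ is a gradient flow whose only sinks are the $\xi_j$, $j\in O_i$. Then each $C_{ij}(\Sigma)$ contains the intersection of $W^u(\xi_i)$ with the open basin of $\xi_j$, hence is a full-dimensional (dimension $\#O_i$) subset of $W^u(\xi_i)$; equability means all these connections have the \emph{same} dimension $\#O_i$, and almost completeness holds because the complement of the union of basins is the union of stable sets of the saddle/repeller critical points of the potential, which has measure zero. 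Typical trajectories are \emph{not} driven onto a single coordinate plane; they converge to a sink through the interior of the cone, so your proposed Lyapunov function on ratios $u_j/u_k$ is aimed at proving the wrong statement. (The paper does use a monotone quantity, but only to collapse $Q_i=\Omega_i\oplus\langle\xi_i\rangle$ onto $\Omega_i$, not to collapse $\Omega_i$ onto coordinate planes.)

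Two smaller points on Step 3. First, no further modification of $f$ is needed to produce $N'$: the separating nodes are exactly the non-minimal critical points of the gradient potential on each $\Omega_i$ (e.g.\ the node $\zeta$ on the diagonal of the $(x_3,x_4)$-plane in the Kirk--Silber realization), and $\Sigma'$ is taken to be the union of closures of the $W^u(\xi_i)$, which is closed by construction. Second, you overclaim when you conclude that $\Sigma'$ is complete and hence clean: the separating nodes may have unstable directions transverse to $Q_i$ (again $\zeta$ in the Kirk--Silber example has an unstable direction towards $\xi_1$), so the theorem only asserts that $\Sigma'$ is closed, and cleanness is left as a conjecture in the paper. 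Your closing remark correctly identifies where the no-$\Delta$-clique hypothesis must enter, but the resolution is the explicit gradient structure on $\Omega_i$, not a global sign estimate on $W^u(\xi_i)$ in the $u$-coordinates.
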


\begin{proof}
For $j=1,\ldots,n$ we define the smooth vector field on $\R^n$
\begin{equation}
\dot{x}_j = f_j(x):= x_j F_j(x)
\label{eq:system}
\end{equation}
where
\begin{equation}
F_j(x):=1+\sum_{i} [(\epsilon +\eta)
A_{ij}-\eta(1-\delta_{ij})-1]x_i^2.
\label{eq:defFj}
\end{equation}
We set $A_{ij}=1$ if $G$ prescribes a connection from $\xi_i$ to $\xi_j$, and $A_{ij}=0$ otherwise, while $\delta_{ij}$ is the Kronecker symbol and the constants $\epsilon, \eta$ satisfy $0< \epsilon <1$ and $\eta >0$. 

Since the vector field \eqref{eq:system} satisfies the hypotheses of \cite{AshPos13}, only the last statement requires proof.

There are equilibria at $\xi_j$ corresponding to the unit basis for $\R^n$. Let $N=\{\xi_j\}$ denote these equilibria of (\ref{eq:system}) on the coordinate axes. 

Lack of 1- and 2-cycles can be expressed as
$A_{ii}=0$, $A_{ij}A_{ji}=0$
for all $i$ and $j$, while lack of $\Delta$-cliques means that
$A_{ij}A_{jk}=1$ implies $A_{ik}=0$ for any $i,j,k$. We write
$$
O_j:=\{k\in\{1,\ldots,n\}~:~A_{jk}=1\}
$$
for the non-empty set of indices corresponding to the outgoing directions from $\xi_j$.
The proof proceeds in the following steps.
\paragraph{Step 1 -- existence of an absorbing region for the dynamics:}

We write $R:=|x|^2=\sum_j x_j^2$ and calculate
\begin{eqnarray*}
	\frac{\textnormal{d}}{\textnormal{d}t}R&=& \sum_j 2x^2_jF_j(x)\\
	&=&\sum_j 2x_j^2 \left[1+\sum_i [(\epsilon+\eta) A_{ij}-\eta(1-\delta_{ij})-1]x_i^2 \right]\\
	&=&2R-2R^2+2\sum_{i,j}\left[(\epsilon+\eta) A_{ij}-\eta(1-\delta_{ij})]x_i^2x_j^2 \right].
\end{eqnarray*}
 But note that 
$$
-\eta\leq(\epsilon+\eta) A_{ij}-\eta(1-\delta_{ij})\leq \epsilon
$$
and so
$$
-\eta R^2\leq \sum_{i,j} [(\epsilon+\eta) A_{ij}-\eta(1-\delta_{ij})]x_i^2x_j^2\leq \epsilon R^2,
$$
which implies
$$
2R(1-R-\eta R)\leq \frac{\textnormal{d}}{\textnormal{d}t}R \leq 2R(1-R+\epsilon R)
.$$
This means that there is an absorbing region $R_0<R<R_1$ where
$$
R_0:=\frac{1}{1+\eta} \leq R \leq R_1:=\frac{1}{1-\epsilon}.
$$
Therefore, for any $\eta>0$ and $0<\epsilon<1$ there is an absorbing spherical annulus
$$
S:=\left\{x~:~\frac{1}{1+\eta} \leq |x|^2 \leq \frac{1}{1-\epsilon}\right\}.
$$
\bigbreak
If we fix $j$ and define the invariant subspace
$$
\Omega_j:=\{x~:~x_k=0 ~\mbox{ if }k\not \in O_j\}
$$
then $\xi_j$ has an unstable manifold contained within the invariant subspace
\begin{equation}
Q_j:=\Omega_j \oplus \langle \xi_j\rangle.
\label{eq:defQj}
\end{equation}

\paragraph{Step 2 -- $\Omega_j$ attracts almost all initial conditions in $Q_j$:}
In fact, every trajectory in $Q_j$ that is not in the one dimensional subspace spanned by  $\xi_j$ limits to $\Omega_j$. We define a function $\Phi_j: Q_j \rightarrow \R$ by
 $$
\tan \Phi_j:= \frac{x_j^2}{\sum_{i\in O_j} x_i^2}
$$
and note that for any $x\in Q_j$ and $i\in O_j$ we have
\begin{eqnarray*}
	\dot{x}_j&=&x_j\left[1+\sum_{k\in O_j}[-\eta-1]x_k^2-x_j^2 \right]\\
	\dot{x}_i&=&x_i\left[1+[\epsilon -1]x_j^2+\sum_{i\neq k\in O_j}[-\eta-1]x_k^2-x_i^2 \right].
\end{eqnarray*}

Note that
$$
\frac{\textnormal{d}}{\textnormal{d}t}\left[ \tan \Phi_j\right] = \frac{\textnormal{d}}{\textnormal{d}t}\left[\frac{x_j^2}{\sum_{i\in O_j} x_i^2}\right]
=(1+\tan^2 \Phi_j) \frac{\textnormal{d}}{\textnormal{d}t}\Phi_j,
$$
so that 
$$
\frac{\textnormal{d}}{\textnormal{d}t}\Phi_j = \dfrac{1}{1+\tan^2 \Phi_j}\frac{\textnormal{d}}{\textnormal{d}t}\left[ \tan \Phi_j\right].
$$
Hence we have
\begin{eqnarray*}
	\frac{[\sum_{i\in O_j}x_i^2]^2+x_j^4}{[\sum_{i\in O_j}x_i^2]^2}\frac{\textnormal{d}}{\textnormal{d}t}\Phi_j &=& 2\frac{x_j\dot{x}_j[\sum_{i\in O_j}x_i^2]- x_j^2\sum_{i\in O_j}x_i\dot{x}_i  }{[\sum_{i\in O_j}x_i^2]^2}
\end{eqnarray*}
for all $x\in Q_j\setminus \{0\}$. This means that
\begin{eqnarray*}
	\frac{\textnormal{d}}{\textnormal{d}t}\Phi_j &=& 2\frac{x_j\dot{x}_j[\sum_{i\in O_j}x_i^2]- x_j^2\sum_{i\in O_j}x_i\dot{x}_i }{[\sum_{i\in O_j}x_i^2]^2+x_j^4}\\
	&=& 2\frac{-\eta x_j^2\sum_{k\in O_j}x_k^2[\sum_{i\in O_j}x_i^2]- x_j^2\sum_{i\in O_j}x_i^2[\epsilon x_j^2-\eta\sum_{i\neq k\in O_j}x_k^2]}{[\sum_{i\in O_j}x_i^2]^2+x_j^4}\\
	&=& -2\frac{x_j^2\sum_{i\in O_j}x_i^2(\eta x_i^2+\epsilon x_j^2)}{[\sum_{i\in O_j}x_i^2]^2+x_j^4}.
\end{eqnarray*}
For any $\eta>0$ and $\epsilon>0$ this quantity is clearly finite as long as $x\neq 0$ and non-positive except when $x_i^2=0$ for all $i\in O_j$. Hence $\Phi_j$ decreases monotonically to $0$ for any initial conditions in $Q_j\cap S$ except when $x_i^2=0$ for all $i\in O_j$. This implies that all initial conditions except those on the $x_j$-axis converge to $\Omega_j$.

\paragraph{Step 3 -- the dynamics restricted to $\Omega_j$ is a gradient flow:}
Suppose $x\in \Omega_j$ so that $R=\sum_{i\in O_j}x_i^2$.
Let
$$
V_j=-\frac{1}{2}R+\frac{1}{4}R^2+\frac{1}{4}\eta \sum_{k\in O_j} x_k^2\left [\sum_{l\in O_j,~l\neq k} x_l^2\right]
$$
and note that for any $i\in O_j$ and $x\in \Omega_j$ we have
$$
-\frac{\partial}{\partial x_i} V_j = x_i(1-R) - \eta x_i \sum_{k\neq i}x_k^2= x_j F_j(x).
$$
Hence the flow (\ref{eq:system}) is a gradient flow when restricted to any $\Omega_j$.

To conclude the proof, note that the only minima of $V_j$ on $\Omega_j$ correspond to stable equilibria of the vector field which are at $x=\xi_i$ for each $i\in O_j$. These equilibria are linearly stable, meaning they are quadratic minima for $V_j$ on $\Omega_j$. All other 
stationary points of $V_j$ are quadratically non-degenerate and correspond to saddles or repellers of (\ref{eq:system}) on $\Omega_j$. This means that the flow on $\Omega_j$ is Morse-Smale and robust to perturbations.
We define the separating nodes $N'$ to be the union of all additional stationary points of $V_j$, and we define the heteroclinic network $\Sigma'$ to be the closures of the unstable manifolds of the $\xi_j$.

Note that $Q_j$ contains $W^u(\xi_j)$ and almost any trajectory in $Q_j$ limits to an equilibrium in $\Omega_j$. Hence by including all equilibria in $Q_j$ in $N'$ we ensure that all $W^u(\xi_j)$ consist of connections between equilibria in the network.
More precisely, any initial condition in $Q_j$ that is in 
$$
T_i=\{x\in Q_j~:~ x_i^2>x_k^2 \mbox{ for all }k\in O_j\}
$$
is asymptotic to $\xi_i$. Because $W^u(\xi_j)$ is transverse to the radial direction $\langle \xi_j\rangle$, almost all trajectories in $W^u(\xi_j)$ limit to one of the stable equilibria $\xi_i$, ensuring that $\xi_j$ is equable in $\Sigma$.
\end{proof}

Note that the network $\Sigma'$ is not just closed but clean if all separating nodes $N'$ have unstable manifolds that are entirely contained within $Q_j$ for some $j$. We see in Subsection~\ref{subsec:KS} that this need not be the case, even for a simple but nontrivial network.

\section{Examples}\label{sec:example}

In this section we discuss several examples to illustrate what it means for a network/node to be (in)complete and/or equable. In an equivariant setting, simple heteroclinic cycles\footnote{Cycles are defined as simple in \cite{KruMel04} if the nodes are in different connected components of 1-dimensional fixed-point spaces and the connections are in 2-dimensional fixed-point spaces.} have been classified into types A, B or C by Krupa and Melbourne \cite{KruMel04}. We use their terminology here to indicate the type of a cycle (through the respective letter) and its number of equilibria (as a subscript). The superscript $+/-$ encodes information about the symmetry group that is not relevant for our discussion.

All of our examples are equivariant under the action of some symmetry group $\Gamma$. We identify objects in the same group orbit so that when the graph has vertices $\xi_i$ and $\xi_j$ and an edge $[\xi_i \to \xi_j]$, the network has connections between the corresponding elements in the group orbits $\Gamma.\xi_i$ and $\Gamma.\xi_j$. These connections are symmetric images of one another.

\subsection{The Kirk and Silber/ $(B_3^-,B_3^-)$ network}\label{subsec:KS}

The heteroclinic network of Kirk and Silber \cite{KirSil94} consists of two cycles of type $B_3^-$ with connections (typically viewed as one-dimensional) between equilibria $\xi_1, \xi_2, \xi_3, \xi_4 \in \R^4$. Note that aspects of this network were previously discussed in \cite[Examples 2.10]{Fie17} and \cite[Case I]{Kiretal12}.

It realizes the graph given in Figure~\ref{Fig:KScomplete} (left). The vector field realizing the network robustly has symmetry $\Z^4_2$ where the group acts as multiplication by $-1$ in each coordinate. The node $\xi_2$ is a splitting node and not complete.\footnote{When a connection $[\xi_i \to \xi_j]$ exists, the connection $[\xi_i \to -\xi_j]$ also exists. However, under the identification of objects in the same group orbit, only $\xi_2$ is a splitting node.} The closure of its unstable manifold contains a separating node $\zeta$ (in the plane containing $\xi_3$ and $\xi_4$) and connections $[\xi_2 \to \zeta]$, $[\zeta \to \xi_3]$ and $[\zeta \to \xi_4]$, see Figure~\ref{Fig:KScomplete} (right).  

\begin{figure}[!htb]
 \centerline{\includegraphics[width=0.9\textwidth]{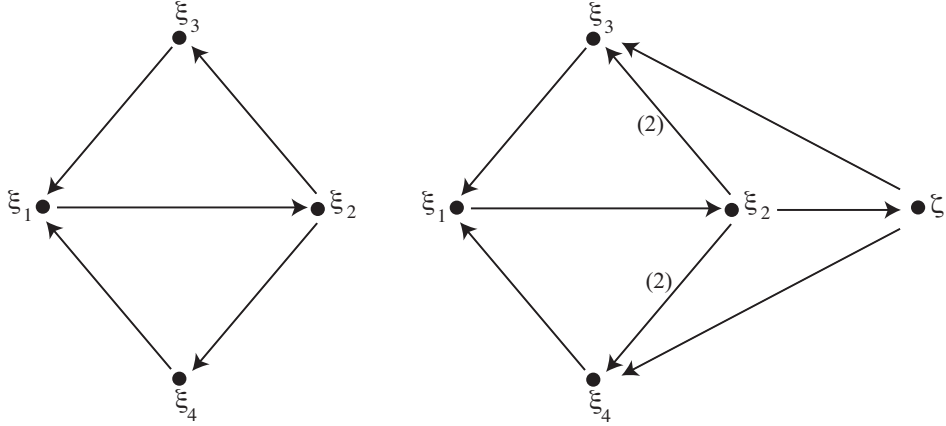}}
 \caption{The $(B_3^-,B_3^-)$ network has an incomplete splitting node at $\xi_2$ (left). Adding a node $\zeta$ and connections $[\xi_2 \to \zeta]$, $[\zeta \to \xi_3]$ and $[\zeta \to \xi_4]$ to the network makes $\xi_2$ complete (right). On the left, $\xi_2$ is equable and exclusive whereas on the right it is not. The numbers in brackets correspond to the dimension of the connection when this is greater than one. \label{Fig:KScomplete}}
\end{figure}

To better illustrate the construction in the proof of Theorem~\ref{thm:completion}, we apply it to this graph/network. With the given connection structure we obtain the system (\ref{eq:system},\ref{eq:defFj}) which can be written
\begin{eqnarray}
	\dot{x}_1&=& x_1[1-|x|^2+\epsilon (x_3^2+x_4^2)-\eta x_2^2]\nonumber\\
	\dot{x}_2&=& x_2[1-|x|^2+\epsilon x_1^2 -\eta (x_3^2 + x_4^2)]\nonumber\\
	\dot{x}_3&=& x_3[1-|x|^2+\epsilon x_2^2 -\eta(x_1^2+x_4^2)]\nonumber\\
	\dot{x}_4&=& x_4[1-|x|^2+\epsilon x_2^2 -\eta(x_1^2+x_3^2)].\label{eq:KSrealise}
\end{eqnarray}
As required this system has four equilibria $\xi_i$ on the unit coordinate axes. We note that
$$
\Omega_1=\{(0,a,0,0)\},~\Omega_2=\{(0,0,a,b)\},~\Omega_3=\Omega_4=\{(a,0,0,0)\}.
$$
and (\ref{eq:defQj}) means that if $x\in Q_2$ then $x=(0,x_2,x_3,x_4)$. Hence, if $x\in Q_2$, we have
\begin{eqnarray*}
	\dot{x}_2&=& x_2[1-|x|^2-\eta(x_3^2+x_4^2)]\\
	\dot{x}_3&=& x_3[1-|x|^2+\epsilon x_2^2 -\eta x_4^2]\\
	\dot{x}_4&=& x_4[1-|x|^2+\epsilon x_2^2 -\eta x_3^2].
\end{eqnarray*}
In fact the only attractors in $Q_2$ are $\xi_3$ and $\xi_4$: consider
$$
\tan \Phi_2:= \frac{x_2^2}{x_3^2+x_4^2},
$$
then
$$
\frac{\textnormal{d}}{\textnormal{d}t}\Phi_2= -2x_2^2\frac{\epsilon(x_3^2+x_4^2)x_2^2+\eta(x_3^4+x_4^4)}{x_2^4+(x_3^2+x_4^2)^2
}
$$
which on $Q_2$ is clearly decreasing to $x_2=0$ unless $x_3^2+x_4^2=0$. Finally, if we define
$$
V_2(0,0,x_3,x_4):
=-R/2+R^2/4+\eta x_3^2x_4^2/2
$$
then on $\Omega_2$ we have
\begin{eqnarray*}
	\dot{x}_3&=&-\frac{\partial V_2}{\partial x_3}\\
	\dot{x}_4&=&-\frac{\partial V_2}{\partial x_4}
\end{eqnarray*}
giving a gradient flow in $\Omega_2$, with almost all trajectories converging to $\xi_3$ and $\xi_4$ (minima of $V_2$).

Restricting the flow to $\Omega_2$ we find a separating node $\zeta = (0,0,x_3,x_4)$ such that $x_3^2=x_4^2=\frac{1}{2+\eta}$. As illustrated in the right panel of Figure~\ref{Fig:KScomplete}, $\zeta$ is a saddle in $\Omega_2$. Its unstable space includes however the direction of $\xi_1$ showing that the unstable manifold of $\zeta$ is not contained in $Q_2$. In this case a clean network can be obtained by also including $W^u(\zeta)$ which means having a two-dimensional connection $[\zeta \to \xi_1]$ in the network.

\subsection{The $(B_3^-,B_3^-,C_4^-)$ network}\label{subsec:B3B3C4}

This network, along with other examples, is discussed in the context of clean networks in \cite[Examples 2.10]{Fie17}. It also appears in Brannath \cite{Bra94} and Castro and Lohse \cite{CasLoh16}. Its graph is given in Figure~\ref{B3B3C4complete} and the simplex method provides a vector field with symmetry $\Z_2^4$ as above that realizes it. The $(B_3^-,B_3^-,C_4^-)$ network has two $\Delta$-cliques: one involving the nodes $\xi_2$, $\xi_3$ and $\xi_4$; the other involving the nodes $\xi_3$, $\xi_1$ and $\xi_4$. Hence, it does not satisfy the hypotheses of Theorem~\ref{thm:completion}. However, the network is clean since the nodes with out-degree greater than 1, $\xi_2$ and $\xi_3$, are complete. Note that $\xi_2$ and $\xi_3$ are not equable: $\dim(\Sigma \cap C_{23})=1 \neq 2 =\dim(\Sigma \cap C_{24})$ and $\dim(\Sigma \cap C_{34})=1 \neq 2 =\dim(\Sigma \cap C_{31})$.

\begin{figure}[!htb]
 \centerline{\includegraphics[width=0.4\textwidth]{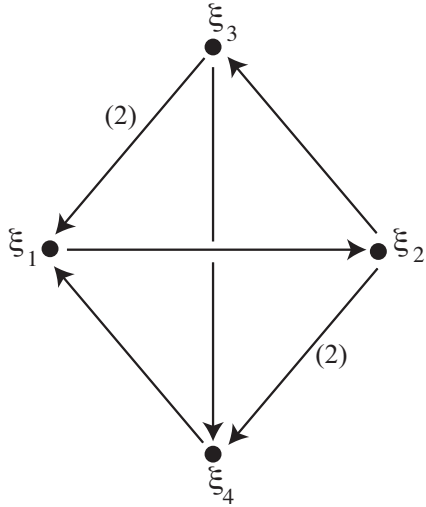}}
 \caption{The $(B_3^-,B_3^-,C_4^-)$ network has no splitting nodes and is clean. There are two $\Delta$-cliques involving the non-equable, but complete, nodes $\xi_2$ and $\xi_3$. The numbers in brackets correspond to the dimension of the connection when this is greater than one.\label{B3B3C4complete}}
 \end{figure}
 
There are infinitely many instances of the $(B_3^-, B_3^-)$ network with one-dimensional connections as equable subnetworks. Their union forms an almost complete, but non-equable subnetwork with the same $(B_3^-, B_3^-)$ graph, but there is no subnetwork of the $(B_3^-, B_3^-, C_4^-)$ network that is both equable and almost complete.

\subsection{The $(B_2^+,B_2^+)$ network}\label{subsec:B2B2}

A network with two cycles of type $B_2^+$ is described in Castro and Lohse \cite{CasLoh14}. Note that even though this object is usually referred to as a heteroclinic network, our Definition~\ref{def:alternative} classifies it as a heteroclinic cycle. In this sense, our definition of heteroclinic cycle is less strict than many definitions in the literature. According to results in \cite{AshPos13}, the cylinder method can be used to provide a vector field in $\R^4$ realizing the corresponding graph. The network is clean and equable. It has no splitting nodes since all connections are between the same two equilibria. 

The vector field supporting the $(B_2^+,B_2^+)$ network has symmetry $\Z_2^3$ where the action of $\Z_2$ is multiplication by $-1$ of each of the last three coordinates of $\R^4$. There is a one-dimensional connection $[\xi_a \to \xi_b]$. The full set of connections $C_{ba}$ consists in three types (distinguished by isotropy) of connections: a one-dimensional connection contained in the $(x_1,x_3)$-plane, another one-dimensional connection in the $(x_1,x_4)$-plane and a two-dimensional connection in the $(x_1,x_3,x_4)$-space. See Figure~\ref{B2B2complete}.

\begin{figure}[!htb]
\centerline{\includegraphics[width=0.8\textwidth]{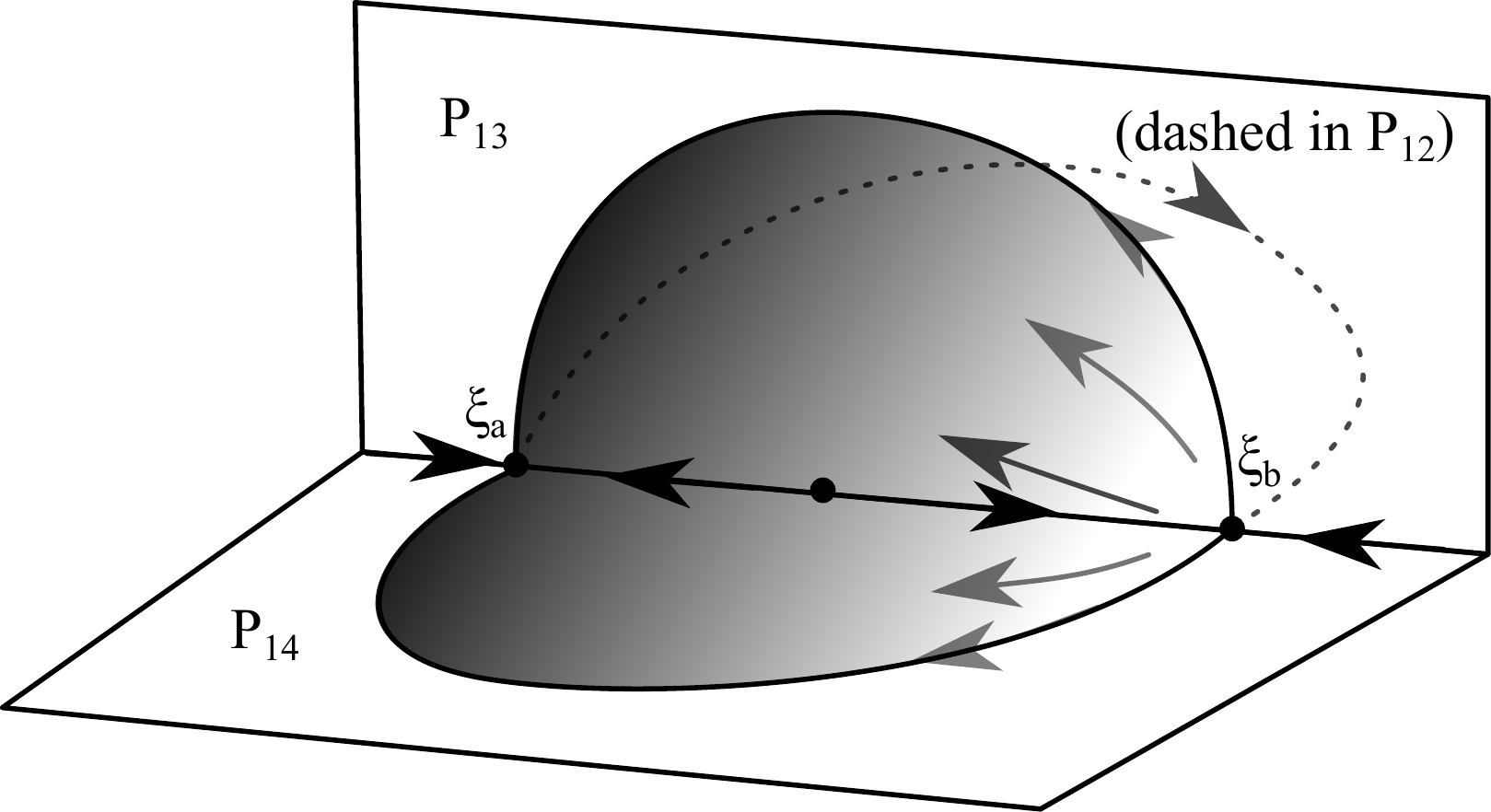}}
 \caption{The $(B_2^+,B_2^+)$ network is clean, equable and exclusive. There are no splitting nodes, the connection from $\xi_a$ to $\xi_b$ is one-dimensional, while the connection back is two-dimensional. The shaded area shows a two-dimensional set of the connections in $C_{ba}$. The connection $C_{ab}$ (dashed) is contained in $P_{12}$.\label{B2B2complete}}
 \end{figure}

Certainly many more examples can be found in the literature. For instance, Kirk et al.\ \cite{Kiretal10} discuss a non-simple network in $\R^4$ with six equilibria and $\Z_2^3$ symmetry that is clean, but not equable. It is obtained by neither the simplex nor the cylinder method and there are nodes where the linearization has complex eigenvalues.

\section{A Markov switching process and almost complete equable networks}

\label{sec:application}

To give some more insight to the importance of almost complete and equable networks, we consider a heteroclinic network $\Sigma=C(N)\cup N$ and define the following idealized (but somewhat natural) discrete-time model of stochastic dynamics on a network. 

For each node $\xi_i\in N$ we consider a probability measure $\rho_i(x)$ that is supported and absolutely continuous on $W^u(\xi_i)$ with respect to a Lebesgue measure, and whose density is non-zero in some neighbourhood of $\xi_i$. We define a one-step discrete-time {\em Markov switching process} $\Xi=\{\xi(n)\in N\cup \{e\}\}_{n\in\Z}$ on $\Sigma$ where $e$ represents an ``escaped" state. We define the switching probability from $\xi_j$ to $\xi_k$ by 
\begin{equation}
\mathcal{P}(\xi(n+1)=\xi_k|\xi(n)=\xi_j)=\rho_j(C_{jk}(\Sigma)).
\label{eq:transitions}
\end{equation}
Note that if a node is not almost complete, then paths of the process can ``leak out" from that node: If we define
\begin{equation*}
\mathcal{P}(\xi(n+1)=e|\xi(n)=\xi_j)=1-\sum_{k}\rho_j(C_{jk}(\Sigma)),
\end{equation*}
then this may be non-zero. Finally, we assume $\mathcal{P}(\xi(n+1)=e|\xi(n)=e)=1$.

The following lemma shows that in cases where this process almost surely does not escape, it explores an almost complete equable subnetwork of $\Sigma$. This subnetwork is obtained by ignoring for each node all lower-dimensional connections (and corresponding nodes) that make it non-equable, e.g.\ $\zeta$ and the connections leading to and from it in Figure \ref{splitting-node-figure}. It is maximal in the sense that it contains all other equable, almost complete subnetworks.

\begin{proposition}
Consider a heteroclinic network $\Sigma$ supporting a Markov switching process $\Xi$. If $\Xi$ starting at any point on $N(\Sigma)$ almost surely avoids escape, then $\Sigma$ is almost complete. Moreover, there is an equable almost complete subnetwork $\Sigma^*$ such that only transitions within $\Sigma^*$ are seen with positive probability.
\label{prop:markov}
\end{proposition}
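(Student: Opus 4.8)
The plan is to prove the two claims of Proposition~\ref{prop:markov} in sequence, beginning with almost completeness of $\Sigma$. First I would argue that if some node $\xi_j$ fails to be almost complete, then $W^u(\xi_j)\setminus\Sigma$ has positive $d$-dimensional Lebesgue measure, where $d=\dim W^u(\xi_j)$. Since $\rho_j$ is absolutely continuous with respect to Lebesgue measure on $W^u(\xi_j)$ and supported on all of $W^u(\xi_j)$, we still need to know that the density is positive on a set of positive measure inside $W^u(\xi_j)\setminus\Sigma$; the hypothesis only guarantees non-vanishing density near $\xi_j$. The resolution is flow-invariance: $W^u(\xi_j)\setminus\Sigma$ is a flow-invariant subset of $W^u(\xi_j)$, so its backward image under the time-$(-t)$ map accumulates on $\xi_j$; hence it must intersect every neighbourhood of $\xi_j$, and in particular the neighbourhood where $\rho_j$ has positive density. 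Using that the time-$t$ map is a diffeomorphism on $W^u(\xi_j)$ (so preserves null sets in both directions), the set $W^u(\xi_j)\setminus\Sigma$ carries positive $\rho_j$-mass near $\xi_j$. Then $\sum_k\rho_j(C_{jk}(\Sigma))<1$, so $\mathcal{P}(\xi(n+1)=e\mid\xi(n)=\xi_j)>0$, and starting the chain at $\xi_j$ gives positive escape probability in one step --- contradicting the hypothesis. Hence every node is almost complete and $\Sigma$ is almost complete.

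Next I would construct $\Sigma^*$. For each node $\xi_i$, let $d_i:=\max_j\dim C_{ij}(\Sigma)$, the maximal dimension among the non-empty connections leaving $\xi_i$, and discard every connection $C_{ij}(\Sigma)$ with $\dim C_{ij}(\Sigma)<d_i$. The key measure-theoretic point is that a connection of dimension strictly less than $d_i$ contributes $\rho_i$-measure zero: by almost completeness, $W^u(\xi_i)$ is the union (up to a null set) of the $C_{ij}(\Sigma)$, and a submanifold of dimension $<d_i$ is Lebesgue-null in $W^u(\xi_i)$ whenever $\dim W^u(\xi_i)=d_i$ (which holds, again up to a null set, because the top-dimensional connections already fill $W^u(\xi_i)$ almost everywhere). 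Hence $\rho_i(C_{ij}(\Sigma))=0$ for every discarded connection, so these transitions are never taken with positive probability, and the positive-probability transitions from $\xi_i$ are exactly those to nodes $\xi_k$ with $\dim C_{ik}(\Sigma)=d_i$. Define $N^*$ to be the nodes reachable from $N(\Sigma)$ via positive-probability transitions, and $\Sigma^*$ the corresponding subnetwork (union of surviving connections and their endpoints); by construction every $\xi_i\in N^*$ is equable in $\Sigma^*$, the discarded pieces have $\rho_i$-measure zero so the surviving transition probabilities from $\xi_i$ still sum to $1$ (using almost completeness of $\xi_i$ in $\Sigma$), hence $\Sigma^*$ is almost complete, and only transitions within $\Sigma^*$ occur with positive probability. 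One should also check indecomposability of $\Sigma^*$ so that it qualifies as a heteroclinic network in the sense of Definition~\ref{def:alternative}; this follows because the positive-probability transition graph is the relevant induced subgraph of $G(\Sigma)$ and Lemma~\ref{lem:cycles}-type reasoning applies to its strongly connected components, restricting $\Sigma^*$ to the component(s) containing the starting nodes if necessary. Maximality among equable almost complete subnetworks follows since any such subnetwork $\tilde\Sigma$ must, at each node, retain only top-dimensional connections (a lower-dimensional connection would have to be a full connected component of $W^u(\xi_i)\cap\tilde\Sigma$, contradicting equability unless $W^u(\xi_i)$ is itself that low dimension, which the null-set argument excludes), hence $\tilde\Sigma\subset\Sigma^*$.

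The main obstacle I anticipate is the first step: upgrading ``density non-zero near $\xi_i$'' to ``the escaping set has positive $\rho_i$-measure.'' This genuinely requires the flow-invariance of $W^u(\xi_i)\setminus\Sigma$ together with the fact that $W^u(\xi_i)$ is the unstable manifold, so every point of it limits backward to $\xi_i$; one must phrase this carefully, e.g.\ pick a point $p\in W^u(\xi_i)\setminus\Sigma$ in a positive-measure subset, flow it backward until $\varphi(-t,p)$ enters the neighbourhood where $\rho_i>0$, and note that $\varphi(-t,\cdot)$ maps the positive-measure set to a positive-measure set inside that neighbourhood (diffeomorphisms of manifolds preserve the class of positive-measure sets). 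A secondary subtlety is ensuring that $\Sigma^*$ as defined is actually indecomposable/connected rather than merely a disjoint union of pieces --- handled by passing to the strongly connected component of the transition graph reachable from the start, which is the natural reading of ``only transitions within $\Sigma^*$ are seen with positive probability.''
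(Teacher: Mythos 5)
Your proposal is correct and follows the same overall logic as the paper's (very terse) proof: zero escape probability forces $\sum_k\rho_j(C_{jk}(\Sigma))=1$, hence almost completeness, and positive-probability transitions correspond to positive-measure, hence top-dimensional, connections, which assemble into the equable almost complete subnetwork $\Sigma^*$. The genuine difference is that you supply a step the paper elides: the hypothesis only gives $\rho_j(W^u(\xi_j)\setminus\Sigma)=0$, and since $\rho_j$ is merely absolutely continuous with density guaranteed non-zero only \emph{near} $\xi_j$, this does not by itself yield Lebesgue-nullity of the escaping set. Your backward-flow argument --- the escaping set is flow-invariant, a bounded positive-measure piece of it is carried by $\varphi(-t,\cdot)$ (a diffeomorphism of $W^u(\xi_j)$, hence null-set-preserving in both directions) into the neighbourhood where the density is positive --- closes that gap cleanly and is a real improvement in rigour over the published argument. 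Likewise your explicit construction of $\Sigma^*$ (discard connections of less than maximal dimension, note they are Lebesgue-null in $W^u(\xi_i)$, check the surviving probabilities still sum to $1$) makes precise what the paper asserts in one sentence. One small caveat: the definition of equable also requires each individual $C_{ij}(\Sigma^*)$ to be a union of manifolds of a single dimension, so strictly one should also discard lower-dimensional strata \emph{within} a retained connection set (as in the $(B_2^+,B_2^+)$ example where $C_{ba}$ mixes one- and two-dimensional pieces); this is handled by the same null-set reasoning and the paper glosses over it too.
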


\proof
Note that by definition, $\Xi$ only gives positive probability to transitions that correspond to positive measure subsets of $W^u(\xi_j)$ for all $j$. 
If $\mathcal{P}(\xi(n+1)=e|\xi(n)=\xi_j)=0$, then $\sum_k\rho_j(C_{jk}(\Sigma))=1$ for all $j$ and so $\Sigma$ is almost complete. Finally, note that (\ref{eq:transitions}) implies that the only connections with non-zero probability of appearing correspond to those with positive measure within $W^u(\xi_i)$ and hence those within some equable almost complete subnetwork.
\qed

We note that Proposition~\ref{prop:markov} is not an equivalence. The converse, i.e.\ that for an equable, almost complete network the Markov process almost surely avoids escape and explores the entire network, will not hold if there is a connecting set $C_{ij}$ of dimension $d$ but zero $d$-dimensional measure.

Consider the stochastic differential equation (SDE)
\begin{equation*}
\textnormal{d}x=f(x)\textnormal{d}t+\alpha \textnormal{d}W_t,
\end{equation*}
where $f$ realizes a given graph as an attracting heteroclinic network for (\ref{eq:ode}), $\alpha>0$ is some small constant and $W_t$ a standard $n$-dimensional Wiener process. As an example, Figure~\ref{FigKSnoise} shows a typical trajectory for the realization of the Kirk-Silber network (\ref{eq:KSrealise}) on $\R^4$ with added noise \cite{ArmStoKir03}. The figure shows a single trajectory exploring almost all directions of exit from the saddle $\xi_2=(0,\pm 1,0,0)$. For smaller noise level $\alpha$, observe that the links become more concentrated around the one-dimensional connections, but still other regions of the manifold are visited with apparent non-zero probability.

Note that a noise-forced heteroclinic system need not behave as a Markov switching process on the last visited node, even in the low noise limit due to the effect of ``lift-off'' \cite{ArmStoKir03,Bak11}.  Nonetheless we do expect the Markov switching process to be a reasonable model for the long-term behaviour of solutions of the SDE in the low noise case if all saddles are ``locally stable'', i.e.\ if the real parts of all expanding eigenvalues at the saddle are smaller in magnitude than the real part of the weakest contracting eigenvalue. This should be valid for the constructions in the proof of Theorem~\ref{thm:completion}, though for other choices of parameters it may no longer be the case.

\begin{figure}[!htb]
 \centerline{\includegraphics[width=\textwidth]{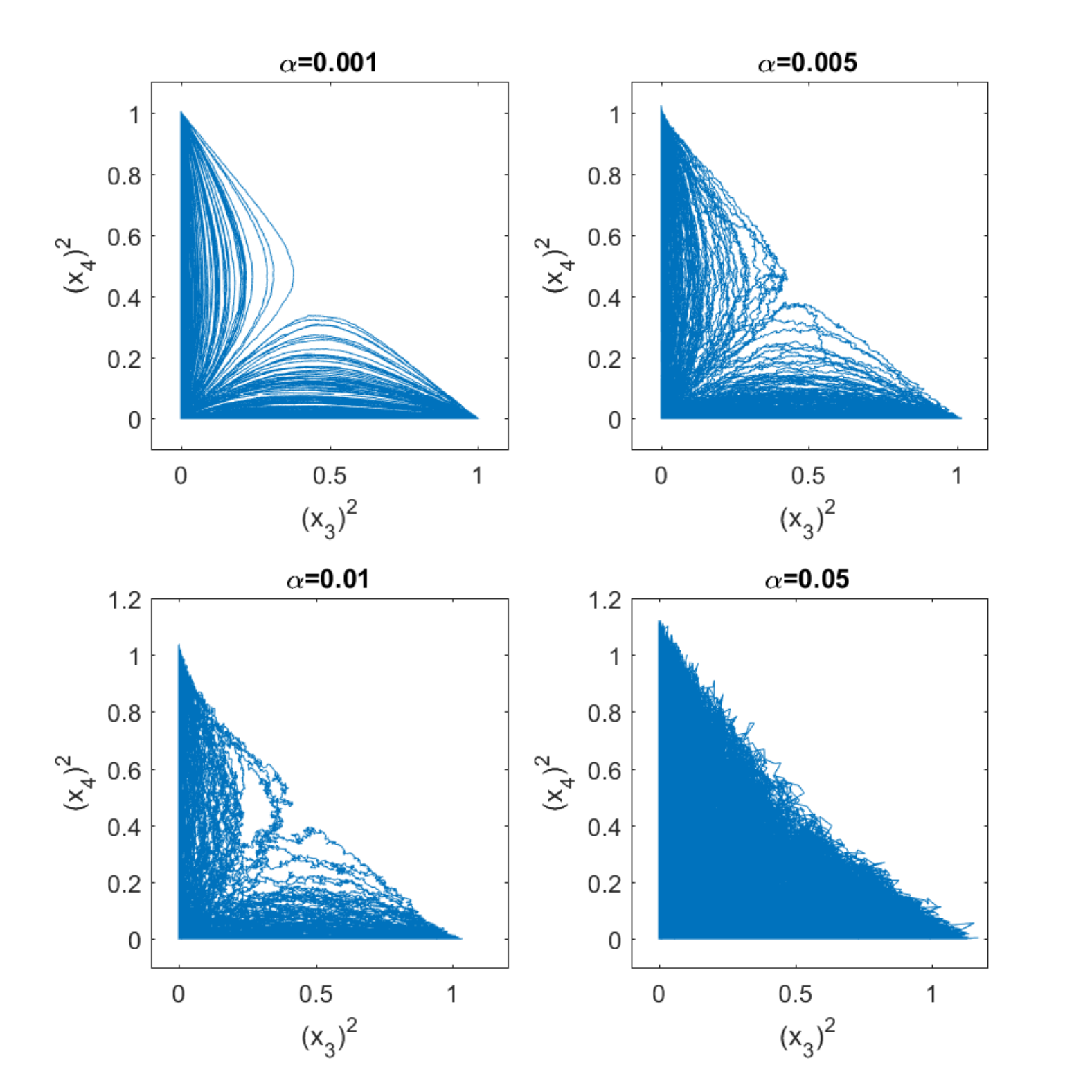}}
 \caption{Trajectories on the unstable manifold of $\xi_2=(0,\pm 1,0,0)$ for the realisation (\ref{eq:KSrealise}) of the Kirk-Silber network with $\epsilon=0.02$, $\eta=0.05$ and increasing noise amplitude $\alpha$. Repeated visits of a single trajectory to
 $x_1^2(t)<0.1$ are shown -- these apparently fill out the 2D unstable manifold of $\xi_2$, and visit arbitrarily closely to the additional node $\zeta$ on the diagonal, as shown in Figure~\ref{Fig:KScomplete}. This simulation uses a Heun integrator with timestep $0.2$. \label{FigKSnoise}}
 \end{figure}

\section{Discussion}\label{sec:discussion}

In summary, we highlight that not only is it possible to realize quite general directed graphs as heteroclinic networks, also these realizations can be maximal in the sense of being almost complete and equable. In addition to the main result Theorem~\ref{thm:completion} and examples in Section~\ref{sec:example}, we present a Markov model and a sense in which almost complete and equable networks can be seen as optimal models of heteroclinic networks perturbed by noise.

While an assumption of no 1-cycles in $G$ is necessary for a robust realization of $G$ as a heteroclinic network, the lack of 2-cycles or $\Delta$-cliques assumed in Theorem~\ref{thm:completion} is presumably not necessary. Indeed, other realization methods  \cite{AshPos13,AshPos16a,Fie15} give robust realizations for $G$ purely on an assumption of no 1-cycles. We conjecture there are parameter choices that give an equivalent result to that in Theorem~\ref{thm:completion} in this more general case. This suggests the following:

\begin{conj}
The conclusion of Theorem~\ref{thm:completion} holds even for directed graphs $G$ that may contain $2$-cycles and $\Delta$-cliques.
\label{conj:completion}
\end{conj}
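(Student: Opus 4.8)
The plan is to mimic the structure of the proof of Theorem~\ref{thm:completion}, replacing the simplex vector field \eqref{eq:system}--\eqref{eq:defFj} by one of the more flexible realization constructions of \cite{AshPos13,AshPos16a} or \cite{Fie15} that only requires the absence of $1$-cycles, and then to re-establish Steps 1--3 of that proof in this more general setting. First I would fix an arbitrary transitive digraph $G=(V,E)$ with no $1$-cycles and take the realization guaranteed by \cite[Proposition 1]{AshPos13} (or its refinement), giving a $\Gamma$-equivariant vector field $f$ on $\R^n$ with equilibria $N=\{\xi_1,\ldots,\xi_n\}$ and connections in invariant subspaces; the ambient dissipativity argument of Step~1 (existence of an absorbing annulus $S$) carries over essentially verbatim, since it only uses that $f$ is dissipative with a spherical attracting region, and this is true for all these constructions. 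As in \eqref{eq:defQj}, for each $j$ I would identify the smallest invariant subspace $Q_j$ containing $W^u(\xi_j)$, and let $\Omega_j\subset Q_j$ be the complementary ``target'' subspace spanned by the out-directions $O_j$. The goal is to choose the remaining parameters in the realization so that, \emph{restricted to each $Q_j$}, (i) $\Omega_j$ attracts all of $Q_j$ except the $\xi_j$-axis, and (ii) the flow on $\Omega_j$ is Morse--Smale with sinks exactly at the $\{\xi_i:i\in O_j\}$. Granting (i)--(ii), one defines $N'$ as the union over $j$ of the extra critical points on the $\Omega_j$, and $\Sigma'$ as the union of closures of the $W^u(\xi_j)$ together with $W^u$ of these extra nodes; then $\Sigma$ is almost complete and equable in $\Sigma'$ exactly as in the proof of Theorem~\ref{thm:completion}, because almost every trajectory in $W^u(\xi_j)$ (being transverse to the radial direction) limits to one of the $\xi_i$, $i\in O_j$, so all out-connections from $\xi_j$ have the common dimension $\dim W^u(\xi_j)-1$.

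The substantive new work is Steps 2 and 3 in the presence of $2$-cycles and $\Delta$-cliques. The obstruction in Theorem~\ref{thm:completion} was that these configurations destroy the clean sign structure that made $\Phi_j$ monotone and $V_j$ a genuine Lyapunov/gradient function on $\Omega_j$: a $\Delta$-clique $A_{ij}A_{jk}=A_{ik}=1$ forces $\xi_k\in O_j$ \emph{and} $\xi_i$ connects to $\xi_k$, so the restricted dynamics on $\Omega_j$ is no longer a competitive gradient system of the simple form used there, and a $2$-cycle $A_{ij}=A_{ji}=1$ likewise reintroduces a connection inside $\Omega_j$. The key step is therefore to show one can still engineer a gradient (or at least Morse--Smale) structure on each $\Omega_j$ with the correct set of sinks. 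I would attempt this by retaining the quadratic-ODE form $\dot x_i=x_iF_i(x)$ but allowing a larger parameter family in $F_i$ — in particular decoupling the coefficient that governs the \emph{ambient} connection structure (which must equal the adjacency data of $G$) from the coefficient that governs competition \emph{within} a given $\Omega_j$. Concretely, on $\Omega_j$ one wants the induced dynamics to be the standard $\Z_2^{|O_j|}$-symmetric Lotka--Volterra-type gradient flow whose only attractors are the unit axis points; this is achievable if the ``within-$\Omega_j$'' interaction coefficients can be taken equal and strongly competitive, \emph{regardless} of what the global adjacency matrix $A$ prescribes between indices in $O_j$. The freedom to do this is precisely what the more general realization theorems of \cite{AshPos16a,Fie15} provide — they build connections using additional dimensions or more general polynomial terms rather than being forced into the rigid simplex template — so the plan is to port the Lyapunov-function argument onto one of those templates.

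I expect the main obstacle to be reconciling two competing demands on the \emph{same} vector field: globally it must realize $G$ robustly (so the ambient eigenvalue/connection signs are pinned down by $A$), while on each slice $\Omega_j$ it must behave like a clean competitive gradient flow (so the restricted signs must be chosen independently of $A$). In the $\Delta$-clique-free case these two demands never collide because no $\Omega_j$ contains any prescribed edge; once $\Delta$-cliques are allowed they do collide, and the technical heart of a proof would be to show — perhaps by adding auxiliary dimensions so that the ``target'' subspace for $\xi_j$ is a blow-up of $\Omega_j$ in which the offending internal edges are pushed off to lower-dimensional invariant subspaces of zero measure — that the collision can always be resolved. A secondary difficulty is verifying robustness: one must check that whatever Lyapunov functions are constructed are non-degenerate (quadratic critical points, transverse intersections), so that the Morse--Smale property, and hence the closed network $\Sigma'$ together with the almost completeness and equability of $\Sigma$, persists under $\Gamma$-equivariant perturbations. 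I would regard a full resolution of these two points as the crux of proving Conjecture~\ref{conj:completion}.
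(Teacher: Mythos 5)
This statement is a \emph{conjecture} in the paper: the authors offer no proof, and in Section~\ref{sec:discussion} they identify essentially the same obstacles you do (the cylinder and two-layer constructions give equable realizations for any $G$ without $1$-cycles, but almost completeness is open, and the unstable manifolds in those constructions are too curved for the Lyapunov-function technique of Theorem~\ref{thm:completion} to carry over). So your proposal cannot be measured against a proof in the paper; it can only be assessed as a proof in its own right, and as such it is not one. You candidly flag the two crux points yourself --- decoupling the within-$\Omega_j$ interaction signs from the global adjacency data, and verifying non-degeneracy/robustness --- and leave both unresolved. A plan that defers exactly the step that makes the statement a conjecture rather than a theorem is a research programme, not a proof.

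To make the gap concrete: if $G$ has a $\Delta$-clique with $i,k\in O_j$ and an edge $[v_i\to v_k]$, then in any simplex-type realization the target subspace $\Omega_j$ contains that prescribed connection, so $\xi_i$ must have an unstable direction \emph{inside} $\Omega_j$ and cannot be a sink there. Your requirement (ii) --- that the restricted flow on $\Omega_j$ be Morse--Smale with sinks exactly at $\{\xi_i : i\in O_j\}$ --- is therefore incompatible with realizing the clique edge in the same coordinate subspace, and equability at $\xi_j$ fails: $C_{ji}$ and $C_{jk}$ acquire different dimensions. This is not hypothetical; it is exactly what happens at the nodes $\xi_2$ and $\xi_3$ of the $(B_3^-,B_3^-,C_4^-)$ network in Subsection~\ref{subsec:B3B3C4}, where the paper notes that no subnetwork of that realization is both equable and almost complete. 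Your suggested escape --- blowing up $\Omega_j$ via auxiliary dimensions so that the offending internal edges live in lower-dimensional invariant subspaces --- is a plausible direction, but it is precisely the construction that would need to be exhibited and verified (including the $2$-cycle case, where $A_{ij}=A_{ji}=1$ forces a connection back into the ``source'' direction), and nothing in the proposal does so.
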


Explicit constructions are shown as the cylinder method of \cite{AshPos13} and the two layer network \cite{AshPos16a}. These show the existence of networks $\Sigma$ that are equable subnetworks realizing $G$ as long as $G$ has no $1$-cycles, and these realizations can be made robust to certain symmetric perturbations. The problem remains to show that the network is almost complete. Note that the $(A_2^+,A_2^+)$- and $(B_2^+,B_2^+)$-networks are simple and may be created by the cylinder method. However, cycles or networks with more than two equilibria that are generated in this way are not simple, because all equilibria are on the same coordinate axis $L$ -- violating the condition that every connected component of $L \setminus \{0\}$ contains at most one equilibrium.  Finally, the unstable manifolds for the cylinder construction are highly curved and it seems much harder to find suitable Lyapunov-type functions as used in the proof of Theorem~\ref{thm:completion}.


Finally, we remark that the construction in Theorem~\ref{thm:completion} (or a strengthened version Conjecture~\ref{conj:completion}) can presumably be strengthened in the following way: It should be possible to show that under the same (or weakened) hypotheses of Theorem~\ref{thm:completion}, an explicit realization can be chosen such that the embedding network $\Sigma'$ is clean. The main obstruction to showing this is explicitly making the separating nodes in $N'\cap Q_j$ transversely stable to $Q_j$. Although it is clear that this only requires a local change to the transverse stability at all separating nodes, it is still a challenge to explicitly give the construction.

\paragraph{Acknowledgements:}

We thank Chris Bick, Mike Field and Claire Postlethwaite for their insightful questions and comments. The second author was partly supported by CMUP (UID/MAT/00144/2013), funded by the Portuguese Government through the Funda\c{c}\~ao para a Ci\^encia e a Tecnologia (FCT) with national (MEC) and European structural funds through the programs FEDER, under the partnership agreement PT2020. The second and third authors benefitted from DAAD-CRUP funding through ``A\c{c}\~ao Integrada Luso-Alem\~a A10/17'', respectively DAAD-project 57338573 PPP Portugal 2017, sponsored by the Federal Ministry of Education and Research (BMBF). Partial support for a visit to Exeter is gratefully acknowledged  from the Centre for Predictive Modelling in Healthcare (EPSRC grant number EP/N014391/1).

\end{document}